%%%%%%%%%%%%%%%%%%%%%%%%%%%%%%%%%%%%%%%%%%%%%%%%%%%%%%%%%%%%%%%%%%%%
%%%                                                              %%%
%%%  Structure of the super-critical giant component             %%%
%%%                                                              %%%
%%%                                                              %%%
%%%%%%%%%%%%%%%%%%%%%%%%%%%%%%%%%%%%%%%%%%%%%%%%%%%%%%%%%%%%%%%%%%%%
\documentclass[11pt,reqno,tbtags]{article}
\usepackage{amssymb,amsthm}
%\pdfoutput=1
%\usepackage[notcite,notref]{showkeys} % shows labels
\usepackage[marginratio=1:1,height=584pt,width=488pt,tmargin=117pt]{geometry}
\usepackage{cite}
\usepackage{amsmath,amssymb,amsfonts,tikz}
\usepackage{enumerate,graphicx}

\numberwithin{equation}{section}

%%% THEOREMS %%%

\newtheorem{maintheorem}{Theorem}

\newtheorem{theorem}{Theorem}[section]
\newtheorem*{theorem*}{Theorem}
\newtheorem{lemma}[theorem]{Lemma}
\newtheorem{claim}[theorem]{Claim}
\newtheorem{proposition}[theorem]{Proposition}
\newtheorem{corollary}[theorem]{Corollary}

\newtheorem{definition}[theorem]{Definition}
\newtheorem*{question*}{Question}

\theoremstyle{definition}{
\newtheorem{example}[theorem]{Example}

\newtheorem{remark}[theorem]{Remark}
\newtheorem*{remark*}{Remark}

}

%%% ENVIRONMENTS

%%% MATH

%%% PROBABILITY
\newcommand{\E}{\mathbb{E}}
\renewcommand{\P}{\mathbb{P}}
  
%\newcommand{\whp}{\ensuremath{\text{\bf whp}}}

%%% GREEK
\renewcommand{\epsilon}{\varepsilon}
%\newcommand{\ga}{\gamma}
%\newcommand{\al}{\alpha}
%\newcommand{\de}{\delta}
%\newcommand{\ep}{\varepsilon}
%\newcommand\eps{\varepsilon}
%\renewcommand\phi{\varphi}

%%% THISPAPER
\newcommand{\cA}{\mathcal A}
\newcommand{\cB}{\mathcal B}
\newcommand{\cC}{\mathcal C}

\newcommand{\cG}{{\mathcal{G}}}

\newcommand{\GC}{{\mathcal{C}_1}} % Giant component.

\newcommand{\TC}[1][\mathcal{C}_1]{#1^{(2)}} % 2-core.

\newcommand{\Bin}{\operatorname{Bin}}

\DeclareMathOperator{\diam}{diam}
\DeclareMathOperator{\dist}{dist}

\DeclareMathOperator{\xor}{\triangle}
\newcommand{\red}{{\textsc{r}}}

\title{\vspace{-0.9cm} Asymptotics in percolation on high-girth expanders}

\author{{Michael Krivelevich\thanks{School of Mathematical
Sciences, Raymond and Beverly Sackler Faculty of Exact Sciences, Tel
Aviv University, Tel~Aviv 6997801, Israel. Email: {\ttfamily
krivelev@post.tau.ac.il}. Research supported in part by BSF grant 2014361 and ISF grant 1261/17.}}
 \and {Eyal Lubetzky\thanks{Courant Institute of Mathematical Sciences, New York University, 251 Mercer Street, New York, NY 10012, USA. Email: {\tt eyal@courant.nyu.edu}.
 Research supported in part by NSF grant DMS-1513403 and BSF grant 2014361.}}
\and {Benny Sudakov\thanks{
Department of Mathematics, ETH, Ramistrasse 101, 8092 Zurich, Switzerland. 
Email: {\tt benjamin.sudakov@math.ethz.ch}.
Research supported in part by SNSF grant 200021-175573.}}
}
\date{}

\begin{document}

\maketitle

\begin{abstract}
We consider supercritical bond percolation on a family of high-girth $d$-regular expanders. Alon, Benjamini and Stacey (2004) established that its critical probability for the appearance of a linear-sized (``giant'') component is $p_c=1/(d-1)$. Our main result recovers the sharp asymptotics of the size and degree distribution of the vertices in the giant and its 2-core at any $p>p_c$. It was further shown in~\cite{ABS04} that the second largest component, at any $0<p<1$, has size at most $n^{\omega}$ for some $\omega<1$. We show that, unlike the situation in the classical Erd\H{o}s--R\'enyi random graph, the second largest component in bond percolation on a regular expander, even with an arbitrarily large girth, can have size $n^{\omega'}$ for $\omega'$ arbitrarily close to $1$. Moreover, as a by-product of that construction, we answer negatively a question of Benjamini~(2013) on the relation between the diameter of a component in percolation on expanders and the existence of a giant component. Finally, we establish other typical features of the giant component, e.g., the existence of a linear~path.
\end{abstract}

\section{Introduction}
A graph $\cG$ is called a $(b,d)$-expander if its maximum degree is $d$, and for every subset  $S\subset V(\cG)$ of  at most $|V(\cG)|/2$ vertices there are at least $b |S|$ edges in the cut between $S$ and $V(\cG)\setminus S$.
 For $0<p<1$ and a given graph $\cG$, let $\cG_p$ denote the corresponding bond percolation on $\cG$, i.e., the distribution over spanning subgraphs of $\cG$ where each edge is present, independently, with probability~$p$.
Our primary focus  will be $\cG_p$ for a ($d$-)regular $(b,d$)-expander $\cG$ whose girth  (length of the shortest cycle) is large.

The pioneering paper of Alon, Benjamini and Stacey~\cite{ABS04} showed that if $\cG$ is an expander and $p>0$ is fixed, there is at most a single linear component in $\cG_p$ with high probability (w.h.p.). Moreover, when $\cG$ is $d$-regular and its girth tends to $\infty$, the authors of~\cite{ABS04} identified the critical percolation probability to be $p_c=1/(d-1)$, showing that, for every fixed $p>p_c$, w.h.p.\ there exists a linear (``giant'') component in $\cG_p$  (see the formal statement below). The latter result was thereafter extended by Peres et al.~\cite{BNP11} to any family of sparse graphs that converge as $n\to\infty$ in the Benjamini--Schramm sense.

More formally, write $\cC_i(G)$ (for $i=1,2,\ldots$) for the connected components of $G$ in decreasing order of their sizes. (By a slight abuse of notation, we also let $\cC_v(G)$ (for $v\in V(G)$)  denote the connected component of $v$ in $G$.)
It was shown in~\cite{ABS04} that if $\cG$ is a regular $(b,d)$-expander on $n$ vertices (more precisely, a sequence of such graphs) with girth tending to $\infty$ with $n$, then for every fixed $p>1/(d-1)$,
\begin{align*} \mbox{for \emph{some} fixed $c=c(p,d)>0$\,:\; }&\lim_{n\to\infty}\P\left(|\cC_1(\cG_p)|>cn\right) = 1\,,	\\
\noalign{\noindent whereas for every fixed $p<1/(d-1)$,}
 \mbox{for \emph{every} fixed $c>0$\,:\; }&\lim_{n\to\infty}\P\left(|\cC_1(\cG_p)|>cn\right) = 0\,.
\end{align*}
That the lack of a linear component w.h.p.\ at 
$p<1/(d-1)$ extends also to $p=1/(d-1)$  follows from the work of Nachmias and Peres~\cite{NP10} (who proved, more generally, that $\P(|\cC_1| \leq A n^{2/3})\to 0$ as $A\to\infty$ in percolation with parameter $p=1/(\Delta-1)$ on any family of graphs with maximal degree~$\Delta$).

Pittel~\cite{Pittel08} refined the results of Alon et al.\ in the special case where $\cG$ is a \emph{random} (uniformly chosen) $d$-regular graph on $n$ vertices for $d\geq 3$ fixed --- well-known to be a expander w.h.p.\ --- showing that the phase transition of $|\cC_1|$ mirrors that in the Erd\H{o}s--R\'enyi graph $\cG(n,p)$: w.h.p.,  $|\cC_1| \leq C(p) \log n$ at $p<p_c = \frac1{d-1}$ vs.\ $|\cC_1| \sim \theta_1 n$ at $p>p_c$ for an explicit $\theta_1(p)$, whereas 
 $|\cC_1|=n^{2/3+o(1)}$ at the critical $p_c$ (the precise order of $n^{2/3}$ and correct scaling of the critical window were subsequently found in~\cite{NP10}).

In this work we obtain the asymptotic size (lower bounded by $c(p,d)n$ above) and degree distribution of the giant component $\cC_1(G)$ for any high-girth expander $G\sim \cG_p$ at $p>1/(d-1)$, as well as its 2-core.

For $d\geq 3$ and $1<\lambda<d-1$, let 
\[ p=\lambda/(d-1)\]
 and let $q=q(\lambda,d)$ be the unique solution in $[0,1)$ of the equation
\begin{equation}\label{eq-q-def}
q = (1-p+pq)^{d-1}\,,
\end{equation}
well known (cf.~\cite{AthreyaNey04}) to coincide with the extinction probability of a $\Bin(d-1,p)$-Galton--Watson tree.

Recall that the 2-core of a connected component $\cC$, denoted here $\cC^{(2)}$, is its maximum induced subgraph with minimum degree at least $2$, and the (tree) excess of $\cC$ is the minimum number of edges that need to be removed from $\cC$ to turn it into a tree (note that the excess of $\cC$ equals that of $\cC^{(2)}$).
Our first main result establishes the asymptotic number of vertices and edges in the largest (giant) component in supercritical bond percolation on a high girth $d$-regular expander, as well as its 2-core, and consequently their asymptotic excess.
\begin{maintheorem}[the giant]\label{mainthm:giant-2core-sizes}
Fix $d\geq 3$ and $\frac{1}{d-1}<p<1$, and letting $q$ be as in~\eqref{eq-q-def}, define
\begin{align}\label{eq-theta-def} 
\theta_1 &:= 1 - q(1-p) - p q^2 \,, &\eta_1 := \tfrac12 p d (1-q^2)\,,\\
\label{eq-eta-def} 
\theta_2 &:= 1 -q - (d-1)p q (1-q) \,, &\eta_2 := \tfrac12 p d (1-q)^2 \,.
\end{align}
For every $\epsilon>0$ and $b>0$ there exist some $c,C,R>0$ such that, if $\cG$ is a regular $(b,d)$-expander on $n$ vertices with girth at least $R$, then 
with probability at least $1-Ce^{-c n}$, the random graph $G\sim \cG_p$ has 
\begin{align}
\label{eq-GC-V-E}
 \big| \tfrac1n |V(\GC)| - \theta_1 \big| < \epsilon \,,
&\qquad
 \big| \tfrac1n |E(\GC)| - \eta_1 \big| < \epsilon \,,\\
\label{eq-TC-V-E}
 \big| \tfrac1n |V(\TC)| - \theta_2 \big| < \epsilon \,,
&\qquad
 \big| \tfrac1n |E(\TC)| - \eta_2 \big| < \epsilon \,.
 \end{align}
 In particular, with probability at least $1-C\exp(-cn)$,  the excess of $\GC$ is within $2\epsilon n$ of $(\eta_1-\theta_1) n$, and the excess of $\TC$ is within $2\epsilon n$ of $(\eta_2-\theta_2) n$.
\end{maintheorem}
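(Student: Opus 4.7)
The plan is to decouple into a local Galton--Watson computation for the means and a concentration argument for the fluctuations. Because $\cG$ has girth $\geq R$, the ball $B_L(v)$ of radius $L \leq (R-1)/2$ around any $v$ is a $d$-regular tree, so BFS from $v$ in $\cG_p \cap B_L(v)$ is a Galton--Watson tree with $\Bin(d,p)$ offspring at the root and $\Bin(d-1,p)$ offspring at deeper levels. By~\eqref{eq-q-def} the latter has extinction probability $q$, and one checks directly that the root's tree survives with probability $1-((1-p)+pq)^d = 1-q(1-p)-pq^2 = \theta_1$ (using $(1-p+pq)^{d-1}=q$). The other three constants fall out of analogous local calculations: $\eta_1 = \tfrac12 dp(1-q^2)$ is the probability that an edge $e=\{u,w\}$ is open \emph{and} at least one of the two $\Bin(d-1,p)$-offspring subtrees obtained by deleting $e$ survives; $\theta_2$ is the probability that at least two of $v$'s incident open edges lead to surviving subtrees; and $\eta_2=\tfrac12 dp(1-q)^2$ is the probability that $e$ is open and both subtrees survive.

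For the enumeration, define local indicators $Y_v := \one\{\text{BFS from } v \text{ in } \cG_p \cap B_L(v) \text{ reaches depth } L\}$ and analogous $Y_e,\, Y_v^{(2)},\, Y_e^{(2)}$ for the edge count, 2-core-vertex count, and 2-core-edge count. Each depends only on edges of $\cG$ within distance $L$ of the corresponding vertex/edge. By the tree approximation and the classical exponential decay of the ``survives but eventually dies'' event in supercritical Galton--Watson trees, $\E Y_v = \theta_1 + O(\rho^L)$ for some $\rho=\rho(p,d)<1$, so $\E\sum_v Y_v = \theta_1 n + O(\rho^L n)$ (and likewise $\eta_1 n,\,\theta_2 n,\,\eta_2 n$ for the others). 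A single $\cG$-edge flip alters $\sum_v Y_v$ by at most $2d^L$, so Azuma--Hoeffding applied to the edge-exposure martingale of $|E(\cG)| = dn/2$ independent indicators gives
\[
\P\Bigl(\bigl|\textstyle\sum_v Y_v - \E\textstyle\sum_v Y_v\bigr| > \tfrac{\epsilon}{2}n\Bigr) \leq 2\exp\!\bigl(-c\epsilon^2 n / d^{2L+1}\bigr) \leq Ce^{-c'n}
\]
for $L=L(\epsilon,p,d)$ fixed, and analogously for the other three sums.

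The final step is to identify $\sum_v Y_v$ with $|V(\GC)|$ (and likewise for edges and the 2-core). One containment is easy: any $v \in \GC$ with $|\GC|>d^L$ has $Y_v=1$, since $B_L(v)$ holds at most $d^L$ vertices, and hence $|V(\GC)|\leq \sum_v Y_v$ once $n$ is large. For the reverse, a $Y_v=1$ vertex lying in a \emph{non-giant} component of size $>L$ is the error we must bound, and I would do so by two-round exposure: write $p = p_1+(1-p_1)p_2$ with $p_1 = p-\delta$ and expose $\cG_{p_1}$ first, then sprinkle at rate $p_2$ on the remaining edges. Applying Steps~1--2 at the lower rate $p_1>1/(d-1)$ yields $(\theta_1(p_1)\pm\epsilon/4)n$ vertices in round-1 components of size $\geq L_1$; every such component of size $s$ has $\geq b\min(s,n-s)\geq bL_1$ boundary $\cG$-edges, all closed in round 1, each opened independently in round 2 with probability $p_2$. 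An Alon--Benjamini--Stacey-style isoperimetric argument (applied to unions of round-1 components, using their boundaries as independent $p_2$-coins) then shows that with probability $\geq 1-e^{-cn}$ all round-1 components of size $\geq L_1$ merge in $\cG_p$ into a single component. Combining this lower bound with the Azuma upper bound and letting $\delta \to 0$ so that $\theta_1(p_1) \to \theta_1(p)$ yields $|V(\GC)| = \theta_1 n \pm \epsilon n$ with probability $\geq 1-Ce^{-cn}$; the edge and 2-core estimates follow identically, and the excess formula $|E|-|V|+1$ then produces the claimed $\pm 2\epsilon n$ bound.

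The main obstacle is this last step. A direct first-moment-plus-expansion bound on the medium-sized components requires $ed(1-p)^b<1$, which may fail when the expansion constant $b$ is small; the two-round argument circumvents this by cleanly separating ``cluster formation'' (round 1, handled by the tree-like local analysis) from ``cluster merging'' (round 2, handled by the isoperimetry of $\cG$ together with independent fresh randomness). Everything else --- the local Galton--Watson identities, the truncation error $O(\rho^L)$, and Azuma concentration --- is routine once this scheme is in place.
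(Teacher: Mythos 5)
Your argument for $|V(\GC)|$ and $|E(\GC)|$ is essentially the paper's: local Galton--Watson computation, edge-exposure martingale via Azuma--Hoeffding, and two-round exposure (cluster formation at rate $p_1 < p$ followed by sprinkling combined with an isoperimetric argument on unions of round-one large components). The containments, the use of expansion plus Menger to merge components, and the bookkeeping as $\delta \to 0$ are all in line with the paper. That half of the proposal is sound.

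The claim that ``the edge and 2-core estimates follow identically,'' however, is where the proof breaks, and this is precisely the technical crux the paper spends most of its effort on. For $e=\{x,y\}$, the local event for $e \in E(\TC)$ is that $e$ is open and both $\cC_x(G\setminus e)$ and $\cC_y(G\setminus e)$ have size $\geq L$; the hope is that sprinkling merges those two clusters, placing $e$ on a cycle. But the merging paths produced by your isoperimetric argument are not under your control and may well pass through $e$ itself --- in which case you have only shown $e \in \GC$, not that $e$ lies on a cycle, and $e$ could be a bridge. Since you want this simultaneously for all edges, you cannot condition on $e$ being absent. A direct port of the giant-component sprinkling therefore only gives the \emph{upper} bound on $|E(\TC)|$ (which is easy: by the girth assumption, any 2-core edge's two ends must each have large one-sided components); it does not give the matching lower bound. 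The paper's fix is a separate device: independently color each edge of $\cG$ \textsc{blue} with a small probability $\hat\epsilon$ and \textsc{red} otherwise, count \textsc{blue} edges whose two one-sided red-subgraph clusters are large, and restrict all sprinkling to red edges. This forces the merging witness path to avoid the blue edge $e$, so $e$ genuinely closes a cycle. Making this work requires showing that the red subgraph still expands on the relevant sets, which is false in general (it has isolated vertices) but true for sets that are ``$k$-thick'' in the paper's sense --- unions of connected pieces of size $\geq k$ --- and that claim needs its own counting argument. After establishing the lower bound on the number of blue 2-core edges, one deconditions on the coloring (the 2-core is coloring-independent, and $\widehat{M} \mid G \sim \Bin(M,\hat\epsilon)$) to recover the bound on $M = |E(\TC)|$. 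None of this is in your proposal, and without it the 2-core lower bound --- hence the estimates on $\theta_2, \eta_2$, and the excess --- is unproven.
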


\begin{figure}
\begin{center}
\begin{tikzpicture}
    \node[scale=1] (plot) at (0,0) {};
    \node (fig) at (0,0) {
		\includegraphics[width=0.7\textwidth]{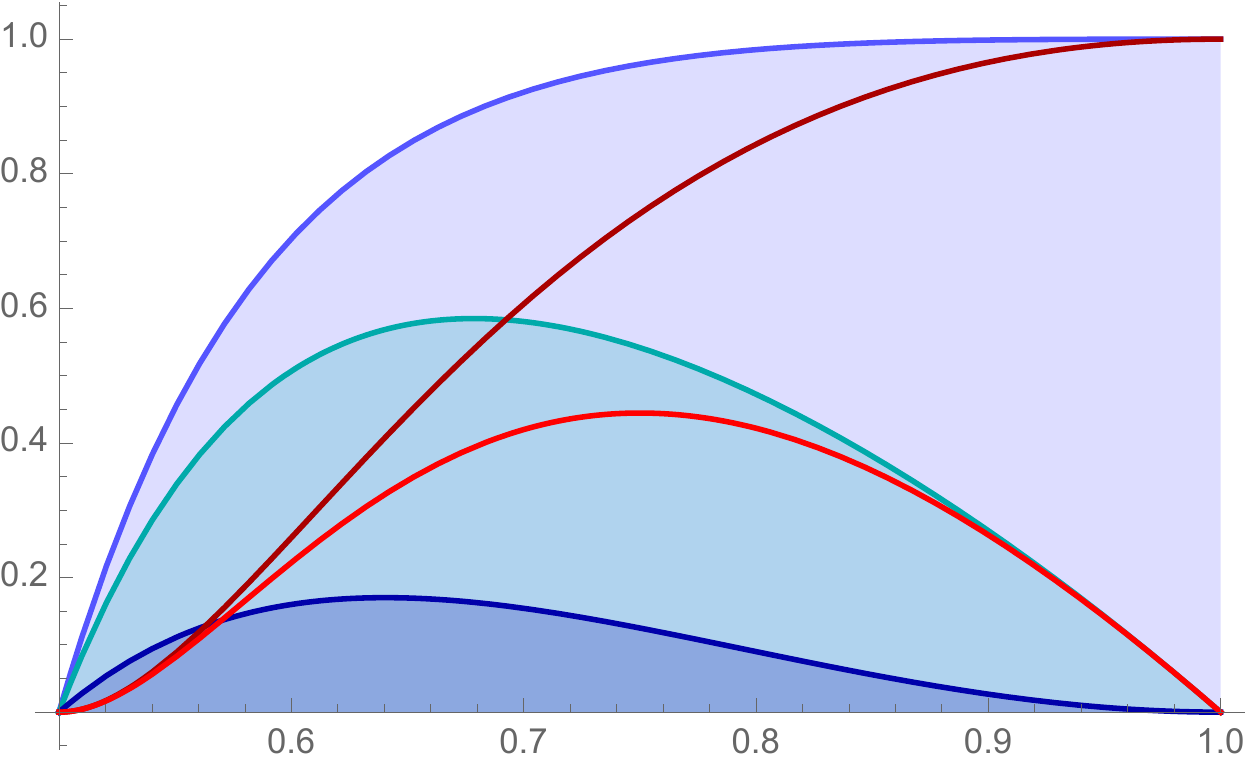}
	};
	\begin{scope}[shift={(plot.south west)},x={(plot.south east)},y={(plot.north west)}, font=\small]

     \draw[draw={rgb:red,85;green,85;blue,255}, thick, <->] (0,-11)--(0,-8.1);
     \node[font=\small,color={rgb:red,85;green,85;blue,255}] at (-1.25,-9.5) {$ \alpha_1$};
     \draw[draw={rgb:red,0;green,171;blue,171}, thick, <->] (0,-7.7)--(0,2);
     \node[font=\small,color={rgb:red,0;green,171;blue,171}] at (-1.25,-3) {$ \alpha_2$};
     \draw[draw={rgb:red,0;green,0;blue,171}, thick, <->] (0,2.4)--(0,11.6);
     \node[font=\small,color={rgb:red,0;green,0;blue,171}] at (-1.25,7) {$ \alpha_3$};

     \draw[draw=red, thick, <->] (7,-11)--(7,-1.85);
     \node[font=\small,color=red!66!black] at (5.75,4.) {$ \beta_3$};
     \draw[draw=red!66!black, thick, <->] (7,-1.38)--(7,9.75);
     \node[font=\small,color=red] at (5.75,-6.25) {$ \beta_2$};
     \node[font=\small,color=red] at (23.5,-11.2) {$ p$};
\end{scope}
\end{tikzpicture}
\end{center}
\vspace{-0.5cm}
\caption{Asymptotic degree distributions of the giant component ($\sim \!\alpha_k n$ degree-$k$ vertices (blue)) and its 2-core ($\sim\! \beta_k n$ degree-$k$ vertices (red)) in supercritical percolation on a high girth 3-regular expander.}
\label{fig:degree-dist}
\end{figure}

\begin{remark}[degree distributions of the giant and the 2-core]\label{rem:deg-seq}
In the setting of Theorem~\ref{mainthm:giant-2core-sizes}, we in fact asymptotically obtain the entire degree distributions of the giant component and 2-core (Theorem~\ref{thm:giant-2core-deg-dist}). For instance, for a fixed $b>0$, consider a sequence $\cG^{(n)}$ of regular $(b,3)$-expanders on $n$ vertices whose girth tends to $\infty$ (arbitrarily slowly) with $n$. For $\frac12<p<1$, let
\begin{align*} \alpha_1 = \frac3p (1-p)^2(2p-1)\,,\quad &\alpha_2 = \frac3{p^{2}}(1-p)(1-4p+6p^2-4p^3)\,,\quad &\alpha_3 = p^3\Big(1-\left(\tfrac{1-p}{p}\right)^6\Big)\,,\\
&\beta_2= \frac3{p^3}(1-2p)^2(1-p)\,,\quad &\beta_3 = \Big(\frac{2p-1}{p}\Big)^3	\,.
\end{align*}
Then w.h.p., the giant component $\GC$ of $G\sim\cG^{(n)}_p$ has $(\alpha_k+o(1))n$ vertices of degree $k$ for each $k\in\{1,2,3\}$, and its 2-core $\TC$ has $(\beta_k+o(1))n$ vertices of degree $k$ for each $k\in\{2,3\}$; see Figure~\ref{fig:degree-dist}.
\end{remark}

\begin{remark}[Limits for large $d$]\label{rem:complete-graph}
For $p=(1+\xi)/(d-1)$ with $\xi>0$ and $d\to\infty$ with $n$, one has 
\[ q\to 1-2\xi + \tfrac83 \xi^2 -\tfrac{28}9\xi^3 +  O(\xi^4) \] and
\begin{align*} 
\theta_1 \to 2\xi -\tfrac83\xi^2 + \tfrac{28}9\xi^3 + O(\xi^4) \,,\qquad
&\eta_1\to 2\xi -\tfrac83\xi^2 +\tfrac{34}9\xi^3 +O(\xi^4)\,,
&\eta_1-\theta_1 \to \tfrac23 \xi^3 + O(\xi^4)\,,
\\
\theta_2 \to 2\xi^2 -4\xi^3 +O(\xi^4)\,,\qquad
& \eta_2 \to 2\xi^2 -\tfrac{10}3\xi^3 + O(\xi^4)\,,
& \eta_2-\theta_2 \to \tfrac23\xi^3 + O(\xi^4)\,.
\end{align*}
Compare this to $\cG(n,p)$ for $p=\frac{1+\xi}n$ with $\xi=o(1)$: there $|V(\cC_1)|\sim 2\xi n$, $|E(\cC_1)|\sim 2\xi n$, $|V(\TC)|\sim 2\xi^2 n$ and the excess is $(1+o(1))\frac23\xi^3 n$ (see, e.g.,~\cite{Bollobas01,vdHofstad,JLR00} and the structure theorems in~\cite{DLP14,DKLP11}).
\end{remark}

In the context of the second largest component of $\cG_p$, it was shown by Alon et al.~\cite{ABS04} that, if $\cG$ is a $(b,d)$-expander (not necessarily regular) on $n$ vertices then there exists $\omega=\omega(b,d)<1$ such that, w.h.p., for every sequence $0<p_n<1$,
\[ \lim_{n\to\infty}\P(|\cC_2(\cG_{p_n})|<n^\omega)=1\,.\]
Indeed, it is well-known (cf., e.g.,~\cite{Bollobas01}) that in the Erd\H{o}s--R\'enyi random graph $\cG(n,p)$ (where $\cG$ is the complete graph) such is the case, as the supercritical regime $p>1/n$ admits a single giant component w.h.p., and all other components are logarithmic in size (see, e.g.,~\cite{Bollobas01} and~\cite{JLR00}). Such is also the case in percolation on random regular graphs (cf.~\cite{NP10}). In light of this --- and in line with results of Pittel~\cite{Pittel08} on $|\cC_2|$ in random regular graphs --- one may believe that the same holds for percolation on expanders, whereby the $n^{\omega}$ above could be replaced by some $C(b,d) \log n$ with probability arbitrarily close to $1$.

Perhaps surprisingly, it turns out that even on a family of regular expanders with arbitrarily large girth, the above polynomial bound of $n^\omega$ is essentially best possible, as the second largest component can have size $n^{\omega'}$ with $\omega'$ arbitrarily close to $1$.
\begin{maintheorem}[second component]\label{mainthm:second-comp}
For every $d\geq 3$, $R\geq 1$, $p\in(\frac1{d-1},1)$ and $\alpha\in(0,1)$ there exist $b>0$ and a regular $(b,d)$-expander $\cG$ on $n$ vertices with girth at least $R$ where $G\sim\cG_p$ has $|V(\mathcal{C}_2)|\gtrsim n^\alpha$~w.h.p.
\end{maintheorem}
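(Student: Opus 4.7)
The plan is to construct $\cG$ as the ``dumbbell'' gluing of a large and a small high-girth $d$-regular expander via a matching-swap interface. Let $m := \lceil n^\alpha \rceil$, and take $\cG_1$ and $\cG_2$ to be $d$-regular graphs on $n-m$ and $m$ vertices respectively, both of girth at least $R$ and both $(b',d)$-expanders for some $b'>0$ (one may take them as random $d$-regular graphs conditioned on girth $\ge R$). Remove matchings $M_i \subset E(\cG_i)$ of size $t = \Theta(m)$ and add $2t$ bipartite interface edges between $V(M_1)$ and $V(M_2)$, choosing the swap edges so as to create no short cycles (feasible since the interface is locally sparse and $\cG_1, \cG_2$ have girth $\ge R$). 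The resulting $\cG$ is $d$-regular with girth $\ge R$.

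Next, I would verify that $\cG$ is a $(b,d)$-expander for some $b>0$. For any $S \subset V(\cG)$ with $|S| \le n/2$, write $S = S_1 \cup S_2$ with $S_i \subset V(\cG_i)$ and bound the edge boundary below by summing the internal boundaries (controlled by the $(b',d)$-expansion of each $\cG_i$, up to a small deficit caused by the removed matchings) and the interface contribution; the critical case is $S = V(\cG_2)$, where $|\partial_\cG S| = 2t = \Theta(|S|)$. Applying Theorem~\ref{mainthm:giant-2core-sizes} separately to $\cG_1$ and $\cG_2$ then shows that with probability at least $1 - C e^{-cm}$, each induced bond percolation $\cG_i \cap \cG_p$ contains a giant: one of size $\sim \theta_1(n-m)$ inside $\cG_1$, and another of size at least $\tfrac12 \theta_1 m \ge \tfrac12 \theta_1 n^\alpha$ inside $\cG_2$.

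It remains to argue that this $\cG_2$-giant persists as a distinct component of $\cG_p$, witnessing $|V(\cC_2)| \gtrsim n^\alpha$ with high probability. The main obstacle is that a direct calculation gives only $\exp(-\Theta(m))$ probability of non-merger: the interface carries $\Theta(m)$ edges, each independently open with probability $p$ and then joining $\cG_1$'s giant to $\cG_2$'s giant with probability $\sim \theta_1^2$. To overcome this, the construction must be refined so that the $\cG_1$-side interface endpoints all lie inside a locally subcritical pocket---each such endpoint having internal $\cG_1$-degree tuned low enough that, with a positive probability bounded below, its $\cG_1 \cap \cG_p$-cluster is a finite subcritical branching structure that misses the $\cG_1$-giant. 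With this refinement the per-edge merger probability drops to $o(1)$, making the expected number of merger edges bounded; by disjointly repeating the small-expander attachment at many such pockets of $\cG_1$, one amplifies the resulting constant non-merger probability to $1-o(1)$ via a standard second-moment concentration argument, yielding a component of size $\gtrsim n^\alpha$ in $\cG_p$ that is distinct from the main giant.
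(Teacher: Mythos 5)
Your starting point and diagnosis are on the right track: a naive dumbbell with $\Theta(m)$ interface edges fails because the two giants merge w.h.p., and you correctly see that the fix must make the interface \emph{subcritical} from $\cG_1$'s side. However, the mechanism you propose for achieving this — ``interface endpoints having internal $\cG_1$-degree tuned low enough'' that their clusters are finite subcritical branching structures — cannot work as stated, and this is a genuine gap. In a $d$-regular graph you have no freedom to lower a vertex's degree, and even if you redirect several of its $d$ edges into the interface (giving it low internal degree $k$), the resulting branching process $\Bin(k,p)$ is still supercritical whenever $p>1/k$. Since the theorem must hold for all $p\in(\frac1{d-1},1)$, including $p$ arbitrarily close to $1$, no fixed bound on degrees yields subcriticality. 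Additionally, carving out ``subcritical pockets'' while preserving both $d$-regularity, girth $\ge R$, \emph{and} a uniform expansion constant $b>0$ is exactly where the difficulty lies, and your sketch gives no construction that does all three simultaneously.

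The paper's key idea, which your proposal is missing, is to subdivide each connecting edge into a path passing through $L$ copies of a small fixed $d$-regular gadget $F$ (with one edge deleted per copy), so the effective per-edge survival probability across the gadget chain is $p^{L}$, which can be driven below $1/(d-1)^2$ by taking $L$ large \emph{regardless of how close $p$ is to $1$}. This makes the branching process governing how the small giant spreads into the interface subcritical ($\Bin(d-1,p^L)$), while $d$-regularity is automatic and expansion is maintained because the bottleneck region is itself built from expanders. Concretely, the paper attaches a small expander $H_1$ on $\Theta(n^\alpha)$ vertices to a large expander $H_2$ through $(d-1)$-ary trees $T_v$ of depth $h\asymp \log_{d-1}n$ whose edges have been replaced by such gadget-paths; the probability that $H_1$'s component reaches $H_2$ is then bounded by $\hat{n}_2\, p^{hL} = o(1)$ where $\hat{n}_2$ is the number of tree leaves. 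Without something like this ``tunable'' subdivision trick, your amplification/second-moment step has nothing subcritical to amplify, so the proof as written does not go through.
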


\begin{remark}
Using essentially the same construction, for any sequence $0<\alpha_1\leq \alpha_2\leq \ldots\leq \alpha_k< 1$ with $k$ fixed, one can construct an expander $\cG$ such that w.h.p.\ $G\sim \cG_p$ has components whose sizes have orders $n^{\alpha_1},\ldots,n^{\alpha_k}$, respectively (in addition to the linearly sized component).
\end{remark}

Towards a construction of an infinite graph that disproves the existence of ``expanders at all scales,'' Benjamini asked the following question (a positive answer to which would be a step in said construction), on the relation between the diameter of the connected component of $v$ in $\cG_p$ to the existence of a giant.

 \begin{question*}[Benjamini~{\cite[Q.~5.5]{Benjamini13}}] Let $\cG$ be a bounded degree expander. Further assume that there is a fixed vertex $v\in \cG$, so that $G\sim\cG_{1/2}$ satisfies
\[ \P\big(\diam(\cC_v(G)) > \tfrac12\diam(\cG)\big) > \tfrac12\,.\]
Is there a giant component w.h.p.?
\end{question*}
A variant of the construction in Theorem~\ref{mainthm:second-comp} gives a negative answer to this question.
\begin{maintheorem}\label{mainthm:q-benjamini}
For every $\epsilon>0$ and $0<p<1$ there exist $b,d,\delta>0$ and, for infinitely many values of~$n$, a $(b,d)$-expander $\cG$ on $n$ vertices with a prescribed vertex $v$, such that the graph $G\sim\cG_p$ satisfies
\[ \P\big(\diam(\cC_v(G)) \geq (1-\epsilon)\diam(\cG)\big) \geq 1-\epsilon\,,\]
and yet there are no components of size larger than $n^{1-\delta}$ in $G$ w.h.p.
\end{maintheorem}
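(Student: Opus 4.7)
The approach is to adapt the construction from Theorem~\ref{mainthm:second-comp} to produce a $(b,d)$-expander $\cG$ in which no $\cG_p$-component is linear-sized while the component of a prescribed vertex $v$ still spans most of $\cG$. I would split $\cG$ into a ``bulk'' ensuring that no $\cG_p$-component exceeds $n^{1-\delta}$, together with a ``reinforced spine'' rooted at $v$ that forces $\cC_v(\cG_p)$ to have diameter at least $(1-\epsilon)\diam(\cG)$ with probability at least $1-\epsilon$.

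For the bulk I would take $m = n^{2\delta}$ small expander clusters of size $k = n^{1-2\delta}$ each, coupled by a sparse super-graph $\mathcal{H}_s$ on $m$ super-nodes whose effective super-percolation is sub-critical; this can be arranged by letting each super-edge consist of only a bounded number of inter-cluster edges, or by subdividing it into a path of sufficient length so that the effective survival probability falls below $1/(d_s-1)$. In $\cG_p$ the within-cluster percolation produces cluster-giants of size $\Theta(k)$, while sub-critical super-percolation merges at most $O(\log n)$ clusters per super-component, so all $\cG_p$-components have size $O(k\log n) \leq n^{1-\delta}$ w.h.p. For the spine I would fix a super-geodesic $P$ of length $\ell = \diam(\mathcal{H}_s)$ in $\mathcal{H}_s$ from the super-node $u_0$ containing $v$ to a far super-node $u_{\mathrm{far}}$, and along $P$ widen each super-edge by adding $t = \Theta(\log\log n)$ parallel inter-cluster edges, each attached to a distinct pair of gateway vertices. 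This preserves $\max\deg(\cG) = O(1)$, while each widened super-edge has a surviving edge in $\cG_p$ with probability $\geq 1 - (\log n)^{-2}$; so the full reinforced super-path percolates with probability $\geq 1-\epsilon/2$, and combined with concentration of cluster-giants containing the relevant gateway vertices, $\cC_v(\cG_p)$ spans all clusters of $P$, attaining diameter $\geq (1-\epsilon)\diam(\cG)$.

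Verifying that $\cG$ is a $(b,d)$-expander amounts to a two-case cut analysis on $S\subset V(\cG)$: for $S$ whose intersection with each cluster is small, within-cluster expansion gives $|E(S,V\setminus S)|\geq b_c|S|$; for $S$ spanning many full clusters, super-graph expansion contributes at least one $\cG$-edge per super-cut-edge. The main obstacle is the tension between expansion and the no-giant condition: by Alon--Benjamini--Stacey, any high-girth $d$-regular expander must have a giant at $p > 1/(d-1)$, so the bulk must explicitly employ short-girth cluster structure to evade this while still keeping $b>0$ bounded below by a constant. Tuning the parameters $k, m, d_c, d_s, t, \delta$ so that sub-criticality of super-percolation, the $(b,d)$-expansion bound, and the spine-survival probability simultaneously hold is the delicate technical step; the construction can be realized for infinitely many $n$ by invoking standard existence results for regular expanders of prescribed sizes.
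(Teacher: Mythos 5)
Your proposal shares the spirit of the paper's construction---engineer a mesoscopic scale where percolation is subcritical so that no giant appears, while rigging the structure near $v$ so that its component reaches close to diametral distance---but it contains a genuine gap: the diameter comparison $\diam(\cC_v(G)) \geq (1-\epsilon)\diam(\cG)$ is asserted without any computation of $\diam(\cG)$, and the construction's geometry creates a tension that makes this comparison non-obvious. The bulk super-edges must be subdivided into long paths (or consist of only $O(1)$ inter-cluster edges) to force subcritical super-percolation, while the spine super-edges are unsubdivided parallel single edges to maximize their survival probability; the former are long in $\cG$ and the latter are short. Consequently, $\diam(\cG)$ may well be dominated by distances between off-spine clusters (through subdivided super-edges), or even be reduced by the spine itself acting as a shortcut, and in either case it is unclear that the spine-plus-cluster-giant structure of $\cC_v(G)$ reaches a comparable depth. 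The analysis is further complicated because the diameter of a cluster-giant under percolation scales roughly as $\log_{(d_c-1)p} k$ while the cluster diameter in $\cG$ is only $\log_{d_c-1} k$, and because $\diam(\cG)$ picks up a factor from traversing two cluster interiors---so the naive ratio is bounded away from $1$ unless the parameters ($L$, $t$, $d_c$, $d_s$, gateway placement) are tuned very carefully; no such tuning is carried out. There is also a secondary issue with ``combined with concentration of cluster-giants containing the relevant gateway vertices'': with $\Theta(\log n)$ gateways along the spine each independently lying in the cluster-giant with probability $1-q$, the union bound fails unless you use the $t = \Theta(\log\log n)$ redundancy to boost the per-super-edge success probability, and this interacts non-trivially with the independent survival of the spine edges---this is plausible but not argued.

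The paper sidesteps all of this by constructing $\cG$ as a subdivided tree $T_1$ rooted at $v$, together with a mirror tree $T_2$ and a subdivided expander $F$ hung off the leaves. The key is that the subdivision depth is tuned to make $\diam(\cG) = (1 + O(\epsilon))\log_{1/p} n$ exactly the height of $T_1$ (Claim~\ref{clm:cons3-diam(G)}), while the top portion of $T_1$ near $v$ is undivided (hence supercritical) and the bottom portion is subdivided (hence subcritical). A Galton--Watson survival argument then shows $\cC_v(G)$ reaches depth $(1-2\epsilon)\log_{1/p}n$ with probability $\geq 1-\epsilon$ (Claim~\ref{clm:cons3-diam(C)}), and a first-moment bound on the subdivided levels shows no component exceeds $n^{1-\epsilon/3}$ (Claim~\ref{clm:cons3-no-giant}). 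This tree-based geometry makes the diameter numerology transparent in a way your cluster-plus-spine design does not. If you want to salvage your approach, you would at minimum need to (i) compute $\diam(\cG)$ in terms of the subdivision length $L$, the cluster diameter, and $\diam(\mathcal{H}_s)$, (ii) show the spine plus cluster-giant structure of $\cC_v(G)$ achieves $(1-\epsilon)$ of this, and (iii) verify the gateway--giant--survival union bound; the third point is doable, but the first two look harder than adapting the paper's tree construction.
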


Finally, in~\S\ref{sec:separators_long_paths}, we explore additional typical properties of percolation on high-girth expanders (where we find the typical order of those random variables but not their precise asymptotics): the existence of linear simple paths (Corollary~\ref{cor-long_path}), the fact that linearly many vertices must be removed in order to disconnect the giant component (Theorem~\ref{thm-separators}), and the existence of complete minors (Corollary~\ref{cor-complete_minor}).

\subsubsection*{Main techniques.}\vspace{-5pt}

The starting point in proving Theorem~\ref{mainthm:giant-2core-sizes}, and furthermore its more detailed version Theorem~\ref{thm:giant-2core-deg-dist} which gives the typical degree profile in the giant and 2-core of $\cG_p$, is a refinement of the elegant sprinkling argument of Alon et al.~\cite{ABS04}.  While that argument was used in~\cite{ABS04} and later also in~\cite{BNP11} to show that w.h.p.\ a \emph{fixed proportion} of the vertices whose local neighborhoods in~$\cG_{p-\epsilon}$ are ``large'' will  belong to the giant after adding $\cG_{\epsilon}$, in fact \emph{all but a negligible proportion} of such vertices will be in the giant. Namely, the local property of whether the connected component of a vertex $x$ has size at least $R$ turns out to be a predictor, for all but $\epsilon(R) n$ vertices, as to whether  $x \in \cC_1(\cG_p)$  (similarly for edges) --- see Proposition~\ref{prop:E1-V1-approx}. 
One can then readily use this local predictor to read off the degree profile, and more generally, the entire local geometry in $\GC(\cG_p)$ for a high girth expander $\cG$, as demonstrated next.

\vspace{-1pt}
\begin{example}\label{ex:subgraphs}[Density of local neighborhoods in $\cC_1$]\label{exmample:arbitrary-subgraphs}
Let $G\sim \cG_p$ for $\frac1{d-1}<p<1$ and a $d$-regular expander $\cG$ on $n$ vertices whose girth tends to $\infty$ with $n$.
Set $q$ as in~\eqref{eq-q-def}, let $T$ be a rooted tree with $k$ levels and $\ell$ leaves (for some fixed $k,\ell$), and let $\alpha_T = \P(\mathcal{T}_p \simeq T)$, where $\mathcal{T}$ is the tree on $k$ levels that is $d$-regular except at its leaves (and $\simeq$ denotes graph isomorphism).
The number of vertices in $\cC_1(G)$ whose $k$-radius neighborhood is isomorphic to $T$ is then w.h.p.\ asymptotically $\left(1-q^\ell\right)\alpha_T \, n$.
\end{example}
\vspace{-7pt}
\begin{example}\label{ex:paths}[Asymptotic density of paths in $\cC_1$]
For $G$ as in Example~\ref{ex:subgraphs} and for any fixed $\ell\geq1$, the number of paths with $\ell$ edges in $\cC_1$ is w.h.p.\ asymptotically
$\frac12d(d-1)^{\ell-1} p^{\ell} \big(1-q^{\ell+1}(1-p+pq)^{1-\ell}\big)n$.
\end{example}
\vspace{-1pt}

The analysis of cycles in $G$ (and its 2-core) is substantially more delicate. Naively, one might expect our results to suggest that, if a certain edge $xy$ in $G\sim\cG_p$ is such that in the graph $H_{xy}:= G\setminus \{xy\}$ the components of $x$ and $y$ are both ``large,'' then both should typically belong to the giant in $H$ (which is a subset of the giant of $G$), hence the edge $xy$ should lie on a cycle in $\GC(G)$. However, turning this intuition into a rigorous argument is problematic in light of the fact that we cannot actually delete $xy$ (as we aim to carry this analysis simultaneously for all edges $xy$), and our mechanism of securing that the components of $x$ and $y$ would belong to the giant was non-constructive (arguing that there are small cuts whence sprinkling would patch most such components together); in particular, that argument gave no control over whether or not the edge $xy$ itself is a bridge as opposed to a cycle edge in $\TC(G)$.

To remedy this, we introduce the notion of a \emph{$k$-thick} set --- roughly put, a set that can be covered by disjoint connected components of size at least $k$ each (see Definition~\ref{def:k-thick}) --- and show, in what may be of independent interest, that if $G$ is an expander and $H$ is a slightly percolated subgraph of $G$ (that is, $H\sim G_{1-\delta}$ for some small $\delta>0$), then every linearly-sized $k$-thick set expands in $H$ (cf.\ Claim~\ref{clm:G-red}). Since we seek to analyze the effect of sprinkling on components that are ``large,'' such expansion suffices, and does in fact hold in $H$, supporting the above framework of the proof. 

Finally, our constructions in Theorem~\ref{mainthm:second-comp} and~\ref{mainthm:q-benjamini} exploit the source of some of the obstacles described above: while we have full understanding of the geometry of $G$ in the microscopic scale (locally the graph is a regular tree up to an arbitrarily large radius), and some control over it in the macroscopic scale (the expansion implies bounds on the number of edges between every two linearly-sized sets), we have limited control over it in the intermediate scales. For instance, if the $k$-radius ball around a vertex $x$ grows as $(d-1)^k$ then percolation is supercritical, as opposed to growing only as $b^k$ whence it might be subcritical (yet still satisfying the expansion property). Our constructions combine both these types of expansion in the mesoscopic scales to create components of size $\Theta(n^{\alpha})$ ($0<\alpha<1$) in~$\cG_p$.
 
\section{Asymptotics of the giant component and its 2-core}\label{sec:asymp}

The goal in this section is to prove the following result (from which Theorem~\ref{mainthm:giant-2core-sizes} follows) describing the typical degree distribution of the giant component and its 2-core in $\cG_p$ up to an error of $o(n)$.

\begin{theorem}\label{thm:giant-2core-deg-dist}
Fix $d\geq 3$ and $1<\lambda<d-1$, let $p=\lambda/(d-1)$, and with $q$ as in~\eqref{eq-q-def}, define
\begin{align}\label{def:alpha_k}
 \alpha_k &= \binom{d}k p^k (1-p)^{d-k} (1-q^k) & (k=1,\ldots,d)\,,\\	
 \label{def:beta_k}
 \beta_k &= \binom{d}{k}   \frac{p^k (1-q)^k q}{\left(1-p+pq\right)^{k-1}}  &(k=2,\ldots,d)\,.
 % sum_k k \beta_k  = d p(1-q) - d p(1-q) (1-p(1-q))^{d-1}
             %    = d p (1-q)^2
             %.   = 2 \eta_2
 % \sum_k k \alpha_k = d p - \sum_k \binom{d}k k (pq)^k (1-p)^{d-k}
  % = d p - (1-p)^d \sum_k bindom{d}k k (pq^2)^k (q(1-p))^{-k} * 
  % 
\end{align}
For a given $d$-regular graph $\cG$, let $D_k$ ($1 \leq k\leq d$) be the number of degree-$k$ vertices in $\GC$, the largest component of the graph $G\sim\cG_p$, and let $D^*_k$ denote the number of degree-$k$ vertices in its 2-core $\TC$. 
Then for every $\epsilon>0$ and $b>0$ there exist some $c,R>0$ so that, if $\cG$ is a regular $(b,d)$-expander on $n$ vertices with girth at least $R$, then with probability at least $1-\exp(-c n)$,
\[ \big| D_k/n - \alpha_k\big|<\epsilon \quad\mbox{ for all $1\leq k \leq d$}\qquad\mbox{ and }\qquad
\big| D^*_k/n - \beta_k\big|<\epsilon\quad \mbox{ for all $2\leq k \leq d$}\,.\]
\end{theorem}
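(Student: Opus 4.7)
The plan is to reduce the theorem to a local computation on the infinite $d$-regular tree and then invoke Proposition~\ref{prop:E1-V1-approx} to transfer the local answer into a statement about $\GC$ (and, with additional work, about $\TC$). Fix $\epsilon>0$, choose an auxiliary radius $r=r(\epsilon)$ large, and take the girth threshold $R$ in the theorem to satisfy $R\geq 2r+1$. Then for every vertex $v$ the $r$-ball $B_r(v)$ in $\cG$ is isomorphic to the depth-$r$ truncation of the $d$-regular tree, so the restriction of $\cG_p$ to $B_r(v)$ is a truncated Galton--Watson process --- $\Bin(d,p)$ at $v$, then $\Bin(d-1,p)$ offspring --- whose extinction probability $q_r$ converges monotonically to the solution $q$ of~\eqref{eq-q-def}.

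For the giant, define the local event
\[ A_v^{(k)} = \big\{ \deg_{\cG_p}(v)=k,\ \text{at least one of the $k$ subtrees rooted at a neighbor of $v$ reaches depth $r-1$}\big\}. \]
A direct calculation on the truncated tree gives $\P(A_v^{(k)}) = \binom{d}{k} p^k (1-p)^{d-k}(1-q_r^k)$, which tends to $\alpha_k$ as $r\to\infty$. Proposition~\ref{prop:E1-V1-approx} then guarantees that, for all but $\epsilon'(r)\,n$ vertices, the event $\{v\in\GC\}$ coincides with the ``some subtree reaches depth $r-1$'' condition, up to exceptional events of probability $e^{-\Omega(n)}$. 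Hence $|\E[D_k]-\alpha_k n|<2\epsilon' n$ once $r$ is large. For concentration, each percolation edge affects $A_v^{(k)}$ for only the $O(d^r)$ vertices $v$ within distance $r$ from it, so $D_k$ is $O(d^r)$-Lipschitz in the independent edge variables and Azuma's inequality (with $r$ a constant) yields the claimed $\exp(-c(\epsilon)n)$ tail.

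For the 2-core, let $Y_i$ indicate that the $i$-th edge at $v$ is retained in $\cG_p$ and that the subtree rooted at the corresponding neighbor $u_i$ reaches depth $r-1$. The $Y_i$ are i.i.d.\ $\Ber(p(1-q_r))$; put $K=\sum_{i=1}^d Y_i$. A short tree argument shows that, locally, $v$ qualifies as a 2-core vertex with 2-core degree exactly $K$ as long as $K\geq 2$: each surviving neighbor $u_i$ has one ``return ray'' through $v$ (using any of the other $K-1\geq 1$ surviving edges at $v$) together with one ray through its own surviving subtree, so the edge $\{v,u_i\}$ lies on a local cycle. Expanding
\[ \P(K=k) = \binom{d}{k}\big(p(1-q_r)\big)^k (1-p+pq_r)^{d-k} \xrightarrow[r\to\infty]{} \binom{d}{k}\frac{p^k(1-q)^k q}{(1-p+pq)^{k-1}} = \beta_k \]
via the identity $(1-p+pq)^{d-1}=q$ from~\eqref{eq-q-def} gives the correct first moment $|\E[D_k^*]-\beta_k n|<\epsilon' n$, and concentration is again by Azuma.

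The main obstacle is to upgrade this local 2-core prediction to a statement about the genuine 2-core of $\GC$ up to $o(n)$ vertices --- something strictly stronger than Proposition~\ref{prop:E1-V1-approx}, which only recognises membership in $\GC$. The danger is that the $K$ locally surviving subtrees rooted at neighbors of $v$ could fuse at a remote bottleneck, leaving $v$ on a pendant subtree attached to $\GC$ rather than on a cycle of $\TC$. This is precisely where the $k$-thick-set expansion framework advertised in the introduction (see Claim~\ref{clm:G-red}) is needed: applied separately to each surviving subtree of $v$, it forces them, after sprinkling, to connect to $\GC$ through essentially disjoint routes, so the local cycle through $v$ persists in $G$ and $v$ genuinely lies in $\TC$ with 2-core degree $K$.
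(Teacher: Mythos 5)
Your reduction of the degree-distribution claim to a local Galton--Watson computation plus Proposition~\ref{prop:E1-V1-approx} is exactly how the paper handles the giant-component half: the degree-$k$ count in $\GC$ is read off from the local predictor (degree $k$ in $G$ and at least one subtree escaping) and then transferred via Proposition~\ref{prop:E1-V1-approx} with Hoeffding--Azuma concentration. Your local probability $\binom{d}{k}p^k(1-p)^{d-k}(1-q_r^k)\to\alpha_k$ and its 2-core analogue $\P(\Bin(d,p(1-q))=k)=\beta_k$ (via $(1-p+pq)^{d-1}=q$) are both correct, and you correctly note that the 2-core degree equals the number $K$ of escaping incident subtrees once $K\geq 2$, locally.

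The gap is in the 2-core half. You correctly identify the real difficulty — the local ``$K\geq 2$'' event must be promoted to genuine membership in a cycle of $\TC(G)$, which is strictly more than Proposition~\ref{prop:E1-V1-approx} gives — but the proposed resolution does not hold up. You assert that Claim~\ref{clm:G-red}, ``applied separately to each surviving subtree of $v$, forces them, after sprinkling, to connect to $\GC$ through essentially disjoint routes.'' This is not how the argument can go: the sprinkling connections created by the small-cuts/Menger machinery are not disjoint and cannot be made disjoint per subtree, and more importantly there is no a priori reason that the path which welds the subtree at $u_i$ to the giant avoids the edge $vu_i$ itself. The paper's actual mechanism (Proposition~\ref{prop:E2-V2-approx}) is quite different: edges are independently coloured \textsc{blue} with small probability $\hat\epsilon$ and \textsc{red} otherwise; the $k$-thick-set expansion (Claim~\ref{clm:G-red}) is used \emph{globally} to show that sprinkling only \textsc{red} edges still merges all large red clusters; one then counts \textsc{blue} edges whose two red subtrees are large, which must lie on a red-closed cycle and hence in $\TC(G)$; and finally a $\Bin(M,\hat\epsilon)$ thinning argument recovers the total 2-core edge count $M$ from the blue count. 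Without this (or an equivalent) device, the step ``the local cycle through $v$ persists in $G$'' is unjustified, so as written the 2-core portion of your proof is incomplete.
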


Indeed, Theorem~\ref{mainthm:giant-2core-sizes} follows from verifying (recalling~\eqref{eq-q-def}) 
that the quantities $\theta_i$ and $\eta_i$ ($i=1,2$), as defined in~\eqref{eq-theta-def}--\eqref{eq-eta-def}, satisfy the following w.r.t.\ the above defined $\alpha_k$ ($1\leq k\leq d$) and $\beta_k$ ($2\leq k\leq d$):
\[ \mbox{$\theta_1 = \sum_{k=1}^d \alpha_k\,,\quad \eta_1 = \frac12\sum_{k=1}^d k \alpha_k\,,\quad \theta_2 = \sum_{k=2}^d \beta_k\,,\quad  \eta_2 = \frac12 \sum_{k=2}^d k\beta_k$}\,.\]
 
\subsection{The giant component}\label{sec:giant}

Fix $\epsilon>0$ small enough so that $p':=p-\epsilon$ satisfies $p' > 1/(d-1)$. There exists $R_0$ so that, for all $R\geq R_0$, the probability of survival to depth $R$ in a Galton--Watson (GW) tree with offspring distribution $\Bin(d-1,p')$  is at least $1-q'$, and the corresponding probability in a GW-tree with offspring distribution $\Bin(d-1,p)$ is at most $1-q+\epsilon$.
Since $\lambda\mapsto q(\lambda)$ is continuous, one has $q'\uparrow q$ as~$\epsilon\downarrow 0$.   
We will couple $G'\sim \cG_{p'}$ and $G\sim \cG_p$ by letting $E(G) = E(G')\cup E(F')$ for $F'\sim \cG_{\epsilon'}$, where 
\[ \epsilon' := \frac{p-p'}{1-p'}  = \frac{\epsilon}{1-p'}\,.\]

For a graph $H$ and an ordered pair of vertices $x,y\in V(H)$, define
\begin{equation}\label{eq:def-Axy} \mathcal{A}^R_{x,y}= \mathcal{A}^R_{x,y}(H) = \left\{ |\cC_y(H\setminus \{xy\}) | \geq R \right\} \end{equation}
(where $H\setminus\{xy\}$ is obtained by deleting the edge $xy$ if present), and for an unordered pair  $x,y\in V(H)$~let
\begin{equation}\label{eq:def-Bxy} \mathcal{B}^R_{xy} =\mathcal{B}^R_{xy}(H)= \left(\mathcal{A}^R_{x,y} \cup  \mathcal{A}^R_{y,x}\right) \cap \{xy\in E(H)\}\,; \end{equation}
i.e., $\cA_{x,y}^R$ says that after removing $xy$ (if present) the component of $y$ has at least $R$ vertices, while $\cB_{xy}^R$ says that, in addition to this, after removing $xy$ the component of $x$ also has at least $R$ vertices, and that the edge $xy$ does belong to $H$. 
Further define, for every vertex $x\in V(H)$,
\begin{equation}\label{eq:def-Bx} \cB^R_x = \bigcup_{y} \left(
\mathcal{A}^R_{x,y} \cap \{xy\in E(H)\}\right)\,.\end{equation}
Finally, let
\begin{equation}\label{eq-U1-def} 
\begin{array}{l}
E_1(H) = \{ xy\in E(H) \,:\; \cB^R_{xy}(H)\mbox{ holds}\}\,,\\ 
\noalign{\medskip}
V_1(H) = \{ x\in V(H)\,:\; \cB^R_x(H)\mbox{ holds}\}\,.
\end{array}
\end{equation}
The main result we wish to prove in this subsection is as follows.
\begin{proposition}\label{prop:E1-V1-approx}
For every $\epsilon,b>0$ there exist $R$ and $c>0$ such that, if $\cG$ is a regular $(b,d)$-expander on $n$ vertices with girth greater than $2R$, and $G\sim \cG_p$, then
\begin{align}
\P\Bigl( \bigl| E_1(G) \xor E(\GC(G)) \bigr| > \epsilon n\Bigr) &\leq \exp(-c n)\,,\label{eq:approx-E1}\\
\P\Bigl( \bigl| V_1(G) \xor V(\GC(G)) \bigr| > \epsilon n\Bigr) &\leq \exp(-c n)\,.\label{eq:approx-V1}
\end{align}
\end{proposition}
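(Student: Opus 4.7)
The plan is to observe that $\cB_x^R$ and $\cB_{xy}^R$ are essentially local proxies for membership in a $G$-component of size at least $R$, and then to sharpen the sprinkling argument of Alon--Benjamini--Stacey so that \emph{all but} $\epsilon n$ vertices in such ``large'' components actually lie in $\GC(G)$. Concretely, since $|\cC_y(G\setminus\{xy\})|\leq|\cC_x(G)|$, the event $\cB_x^R$ forces $|\cC_x(G)|\geq R+1$; conversely, pigeonholing over the at most $d$ edges incident to $x$ in $G$, if $|\cC_x(G)|\geq dR+1$ then some neighbor $y$ must witness $\cB_x^R$ (otherwise a case split on whether every incident edge of $x$ is a bridge forces $|\cC_x(G)|<1+dR$). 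Thus
\[
\{x:|\cC_x(G)|\geq dR+1\}\;\subseteq\;V_1(G)\;\subseteq\;\{x:|\cC_x(G)|\geq R+1\}\,,
\]
and an analogous sandwich holds for $E_1$. Combined with the bound $|\GC(G)|\geq c(b,p,d)\,n$ w.h.p.\ from~\cite{ABS04}, both~\eqref{eq:approx-E1} and~\eqref{eq:approx-V1} reduce to showing that, with exponentially small failure probability, the set $U:=\{x:|\cC_x(G)|\geq R\}\setminus V(\GC(G))$ of ``large but non-giant'' vertices satisfies $|U|\leq\epsilon n$; the edge statement then follows by bounding the extra edges by $d|U|$ and reabsorbing the factor $d$ into $\epsilon$.

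To control $U$, I would use the two-round exposure already set up in the paper: fix $p'=p-\epsilon_0>1/(d-1)$, set $\epsilon'=(p-p')/(1-p')$, and write $G=G'\cup F'$ with independent $G'\sim\cG_{p'}$ and $F'\sim\cG_{\epsilon'}$. The girth hypothesis makes the $R$-ball around any vertex a $d$-regular tree, so $\{|\cC_x(G)|\geq R\}$ is locally determined and has probability $1-q_R(p)$ with $q_R(p)\to q$ as $R\to\infty$; Chernoff applied to sums of such indicators over a maximum packing of vertices into disjoint $R$-balls (and then symmetrization) gives exponential concentration of the count. The same holds for $G'$, and continuity of $q(\cdot)$ lets us pick $R$ and $\epsilon_0$ so that the two counts agree up to $\epsilon n/4$. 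Hence it suffices to show that $L':=\{x:|\cC_x(G')|\geq R\}\setminus V(\GC(G))$ has $|L'|\leq\epsilon n/4$ w.h.p.

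For this last bound, the sprinkling step: assume for contradiction $|L'|>\epsilon n/4$. Each $G'$-component witnessing $L'$ has size $\geq R$ and is disjoint from $V(\GC(G))$ (since it is contained in a non-giant $G$-component). Partition these components into two sub-collections whose vertex unions $S,T\subseteq V(\cG)$ each have size $\geq \epsilon n/16$. Crucially, no $\cG$-edge in the cut $E_\cG(S,V\setminus S)$ lies in $G'$ --- any such edge would have merged two $G'$-components, contradicting that $S$ is a union of whole components --- so the $(b,d)$-expansion of $\cG$ makes $b|S|=\Omega(n)$ candidate edges available to the sprinkle. Each is independently present in $F'$ with probability $\epsilon'$, so with probability $1-e^{-\Omega(n)}$ at least one such edge appears, merging a $G'$-component of $S$ into $T$ or into $V(\GC(G))$ and contradicting the separation. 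A union bound over the at most $2^{n/R}$ partitions (the number of $G'$-components of size $\geq R$ is $\leq n/R$) is absorbed by $e^{-\Omega(n)}$ once $R$ is chosen large in terms of $\epsilon$ and $b$.

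The main obstacle is this final sprinkling step: the original ABS argument only needs a \emph{positive} fraction of large components to join the giant, whereas here we must show all but an $\epsilon$-fraction do, forcing expansion to be applied uniformly over \emph{every} candidate linear-sized union of disjoint $G'$-components. What makes this possible is the observation that a union of whole $G'$-components forces every $\cG$-cut edge to have been unsampled by $G'$, so that the \emph{entire} expansion budget of $\cG$ is available for the sprinkle; this ``expansion survives on unions of whole components'' phenomenon is precisely the kernel of the $k$-thick machinery the authors foreshadow in the introduction, and everything else (stability in~(a), the edge version, the factor $d$) is routine.
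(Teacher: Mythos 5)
Your reduction is set up correctly: the sandwich $\{x:|\cC_x(G)|>dR\}\subseteq V_1(G)\subseteq\{x:|\cC_x(G)|\geq R\}$ matches~\eqref{eq:approx-C1} in the paper, and bounding $U=\{x:|\cC_x(G)|\geq R\}\setminus V(\GC(G))$ is indeed what remains, with the edge statement following at cost of a factor $d$. The concentration step is a bit sketchy --- the paper simply applies Hoeffding--Azuma to the edge-exposure martingale (each edge influences $O((d-1)^R)$ of the indicators), which is cleaner than packing disjoint $R$-balls and ``symmetrizing'' --- but that is not where the real trouble is.

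The genuine gap is in the sprinkling step. You argue that since $S$ is a union of whole $G'$-components, every $\cG$-edge in $E_{\cG}(S,V\setminus S)$ is absent from $G'$, hence the sprinkle $F'$ almost surely contains at least one such edge, ``merging a $G'$-component of $S$ into $T$ or into $V(\GC(G))$ and contradicting the separation.'' That last implication is false: the sprinkled edge $(u,v)$ with $u\in S$ lands at some vertex $v\in V\setminus S$ whose $G'$-component could be a \emph{small} one (size $<R$) belonging to neither $T$ nor $\GC(G)$ --- e.g.\ an isolated vertex of $G'$, of which there are linearly many. In that case $S$ merely acquires a new small neighbour and remains disconnected from $T$ and from the giant; no contradiction arises, and iterating the argument leads into an adaptive exposure whose union bound is no longer over $2^{O(n/R)}$ objects. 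This is precisely the problem the paper's Claim~\ref{clm:menger} avoids: Menger's theorem applied to $\cG$ yields $\Omega(n)$ edge-disjoint paths of \emph{bounded length} $\lfloor d/(b\epsilon)\rfloor$ between $\mathcal{S}_1$ and $\mathcal{S}_2$, and if an entire such path survives in $F'\sim\cG_{\epsilon}$ then it directly connects the two sets in $G'\cup F'$ regardless of which small components the interior vertices fall into. Your edge-based argument produces only the first edge of such a path and stops. (Note also that the ``cut edges are unsampled by $G'$'' observation, while true, is not even used in the paper's Claim~\ref{clm:menger}: the surviving path lives entirely in $F'$.) Relatedly, the appeal at the end to the $k$-thick machinery is misplaced: $k$-thick sets are introduced for the 2-core argument of~\S\ref{sec:2-core}, where one must sprinkle with only \textsc{red} edges; the proof of Proposition~\ref{prop:E1-V1-approx} instead relies on Claim~\ref{clm:menger} and the collecting argument of Corollary~\ref{cor:V1-after-sprinkling}.
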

\begin{proof}
Observe that, for any graph $H$ with maximum degree $d$, if $|\cC_x| > d R$ then the vertex $x$ must be incident to some $y$ such that $\cA_{x,y}^R$ holds; similarly, if an edge $xy$ belongs to a component of size at least $2R$ then at least one of the events $\cA_{x,y}^R,\cA_{y,x}^R$ must hold. 
 That is,
\begin{equation}
\label{eq:approx-C1}
\begin{array}{r} \bigcup\left\{ E(\cC)  \,:\;\mbox{$\cC$ is a connected component of $H$ with $|\cC|\geq 2R$}\right\}  \subseteq E_1(H)\,.\\
 \bigcup\left\{ V(\cC)  \,:\;\mbox{$\cC$ is a connected component of $H$ with $|\cC|> d R$}\right\}  \subseteq V_1(H)\,.
 \end{array}	
\end{equation}

By the assumptions that $R\geq R_0$ for a large enough $R_0$ and that the girth is greater than $2R$,
\[1-q \leq \P(\mathcal{A}^R_{x,y}(G)) \leq 1-q+\epsilon\,,\qquad 
1-q'\leq \P(\mathcal{A}^R_{x,y}(G')) \leq 1-q'+\epsilon\,.\]
When $H\sim\cG_p$ or $H\sim \cG_{p'}$, the standard edge-exposure martingale (see, e.g.,~\cite[Sec.~2.4]{JLR00}) --- noting that adding/deleting an edge influences at most $2(d-1)^{R}$ edges --- implies via Hoeffding--Azuma that
\begin{equation}\label{eq:E1-concentrate} \P\left(\Bigl||E_1(H)|-\E [|E_1(H)|]\Bigr| \geq a\right) \leq \exp\left(-\frac{a^2}{4 d n (d-1)^{2R}}\right)\,.	
\end{equation}
and similarly,
\begin{equation}\label{eq:V1-concentrate} \P\left(\Bigl||V_1(H)|-\E [|V_1(H)|]\Bigr| \geq a\right) \leq \exp\left(-\frac{a^2}{4 d n (d-1)^{2R}}\right)\,.	
\end{equation}
Note that, in $G\sim\cG_p$, since the girth of $\cG$ is greater than $2R$ (hence  $\mathcal{A}^R_{x,y}$ and $\mathcal{A}^R_{y,x}$ are independent),
\[ 
p(1-q^2) \leq \P(\mathcal{B}^R_{xy}) \leq  p (1-(q-\epsilon)^2)\,,\]
and, using that $q=(1-p+pq)^{d-1}$ and the mutual independence of $\cA_{x,y}^R$ for all neighbors $y$ of $x$ in $\cG$,
\[ 1-q(1-p+p q) \leq \P(\mathcal{B}^R_x) \leq 1- (1-p + p(q-\epsilon))^d\,.\]
We can therefore deduce that 
\begin{align}\label{eq:E1-mean}
 \tfrac12 pd (1-q^2) n &\leq  \E [|E_1(G)|] \leq \tfrac12 pd (1-(q-\epsilon)^2) n\,,\\
 \label{eq:V1-mean}
  \bigl(1-q(1-p+p q)\bigr) n &\leq  \E [|V_1(G)|] \leq \bigl(1- (1-p + p(q-\epsilon))^d\bigr)n\,.
\end{align}
Combining this with~\eqref{eq:approx-C1}--\eqref{eq:E1-concentrate} implies that, in $G\sim\cG_p$, with probability at least $1-\exp(-cn)$,
\begin{align}
\label{eq:upper-bound-E1}
 \sum_{\cC : |V(\cC)|\geq2R } |E(\cC)|  &\leq |E_1(G)| \leq \left(\tfrac12 pd (1-(q-\epsilon)^2) + \epsilon\right) n \,,\\
 \label{eq:upper-bound-V1}
 \sum_{\cC : |V(\cC)|>dR } |V(\cC)| & \leq |V_1(G)| \leq \bigl(1- (1-p + p(q-\epsilon))^d + \epsilon \bigr)n\,.
\end{align}
(In particular, this gives upper bounds on $|E(\cC_1(G))|$ and on $|V(\cC_1(G))|$.)

Next, we consider $G'$, and note that using~\eqref{eq:E1-concentrate}--\eqref{eq:V1-concentrate}, together with the analogs of~\eqref{eq:E1-mean}--\eqref{eq:V1-mean} for $G'$, yields
\begin{align}
	\label{eq:E1(Gp')}
	\P\left(|E_1(G')| < (\tfrac12 p'd (1-q'^2) - \epsilon)n \right) &\leq \exp(-c n)\,,\\
	\label{eq:V1(Gp')}
		\P\left(|V_1(G')| < (1-q'(1-p'+p'q') - \epsilon)n \right) &\leq \exp(-c n)\,.
\end{align}

\begin{claim}\label{clm:menger}
For every $\epsilon,b,d>0$ there exist $c,R>0$ such that the following holds for large enough~$n$.
If $\cG$ is a regular $(b,d)$-expander with $n$ vertices, and $\mathcal{S}$ is a collection of disjoint vertex subsets of $\cG$, each of size  at least $R$, then the probability that there exist two subsets $\mathcal{S}_1,\mathcal{S}_2\subset \mathcal{S}$, with a total of at least $\epsilon n$ vertices in each, and no path between them in $H\sim \cG_{\epsilon}$, is at most $\exp(-c n)$.
%Furthermore, the same holds after restricting said paths to have length at most $d/(b\epsilon)$.
\end{claim}
\begin{proof}
	By Menger's Theorem and our hypothesis on the edge expansion of $\cG$, for every two disjoint subsets $A,B\subset V(\cG)$ of size at least $\epsilon n$ each, there are at least $b\epsilon n$ edge-disjoint paths between them in $\cG$. 
	Since the total number of edges in $\cG$ is $dn/2$, it follows that for every two such subsets $A,B$,
	\begin{equation}\label{eq:short-paths-in-G}
\mbox{ there are at least } \left\lceil\tfrac{b\epsilon}2 n\right\rceil \mbox{ edge-disjoint paths of length at most } \left\lfloor \tfrac{d}{b\epsilon}\right\rfloor\mbox{ between }A,B\mbox{ in }\cG\,.
	\end{equation}
In particular, this holds for every two subsets $\mathcal{S}_1$ and $\mathcal{S}_2$ of $\mathcal{S}$ each containing at least $\epsilon n$ vertices. The probability that none of these short paths between $\mathcal{S}_1$ and $\mathcal{S}_2$ survive in $\cG_\epsilon$ is at most
	\[ \left(1-\epsilon^{d/(b \epsilon)}\right)^{\frac12 b\epsilon n} \leq \exp\left[ -\tfrac12 b \epsilon^{1+d/b\epsilon} n\right]\,.\]
	Altogether, a union bound over at most $2^{2n/R}$ possible pairs of subsets of $\mathcal{S}$ shows that the probability that there exist $\mathcal{S}_1,\mathcal{S}_2$ violating the statement of the claim is at most 
	\[ \exp\left[ \left(R^{-1}2\log 2  -\tfrac12 b \epsilon^{1+d/b\epsilon} \right)n\right] \,.\]
	Taking $R$ large enough completes the proof.
\end{proof}

\begin{corollary}\label{cor:V1-after-sprinkling}
For every $\epsilon,b,d>0$ there exist $c,R>0$  so that the following holds for large enough~$n$. If $\cG$ is a regular $(b,d)$-expander on $n$ vertices with girth greater than $2R$, then  with probability $1-O(e^{-c n})$ there exists a connected component $\cC$ of $G'\cup F'$ containing all but at most  $2\epsilon n$ vertices of the set \[\overline{V}_1(G') := \left\{ y \in V(G) \,:\; \cA_{x,y}^{R}(G')\mbox{ holds for some }x\in V(G)\right\}\,.\] In particular, $\cC$ contains all but at most $2\epsilon n$ of the vertices $V_1(G') $.
\end{corollary}
\begin{proof}
Let $\mathcal{S}(G') = \{ \cC_y(G') \,:\; y \in \overline{V}_1(G') \}$, noting that every component in $\mathcal{S}(G')$ is of size at least $R$ by definition.
By Claim~\ref{clm:menger}, with probability at least $1-\exp(-cn)$, one cannot partition $\mathcal{S}(G')$ into two subsets $\mathcal{S}_1,\mathcal{S}_2$, each of size at least $\epsilon n$, such that there will be no path connecting them in $G=G'\cup F'$. With this in mind, let $U$ be a set of vertices obtained by collecting (in an arbitrary way) connected components of $\mathcal{S}(G')$ in $G$, until $|U \cap \overline{V}_1(G')|\geq \epsilon n$. Note that $|U\cap \overline{V}_1(G')| < \epsilon n + |\cC\cap \overline{V}_1(G')|$, where $\cC$ is a connected component in $G$ (the last one that joined $U$). 
 If $|\cC\cap \overline{V}_1(G')| \leq |\overline{V}_1(G')| - 2\epsilon n$, then the  sets $\mathcal{S}_1= U\cap \overline{V}_1(G')$ and $\mathcal{S}_2=\overline{V}_1(G')\setminus U$ (each of size at least $\epsilon n$) violate the above property. 
\end{proof}

Note that, on the event stated in the above corollary,  the component $\cC$  contains all but at most $2\epsilon d n$ edges of $E_1(G')$ (losing at most $d$ edges per vertex in $V_1(G')\setminus \cC$). Therefore, using~\eqref{eq:E1(Gp')}, 
with probability at least $1-\exp(-c n)$ we have that $\cC$ is of linear size, and moreover,
\[ |E(\cC)| > \tfrac12 p'd(1-q'^2)n  - (2d+1)\epsilon  n\,.\] 
In particular,  the component $\cC$ belongs to the set of components in the left-hand side of~\eqref{eq:approx-C1}, so 
\[ E(\cC) \subseteq E_1(G)\,,\qquad V(\cC)\subseteq V_1(G)\,.\]
By~\eqref{eq:upper-bound-E1}, the total number of edges in all components $\cC'\neq \cC$ with $|\cC'|\geq 2R$ is at most
\[  |E_1(G) \setminus E(\cC)|\leq \left(\tfrac12pd(1-(q-\epsilon)^2) - \tfrac12 p'd(1-q'^2) \right)n + (2d+2)\epsilon n\,.\]
For small enough $\epsilon>0$, the right-hand is at most $\tilde\epsilon n$, where $\tilde\epsilon \downarrow 0 $ as $\epsilon\downarrow 0$. The proof of~\eqref{eq:approx-E1} is therefore concluded by the fact that, for a small enough $\epsilon>0$, one has $\tilde\epsilon < \eta_1/2$, whence the total number of edges in components $\cC'\neq \cC$ with $|\cC'|\geq2R$ is strictly less than that in $\cC$, thus in fact $\cC=\cC_1$.

Finally, to establish~\eqref{eq:approx-V1}, recall that, by~\eqref{eq:V1(Gp')}, with probability at least $1-\exp(-cn)$ we have
\[ |V(\cC)| \geq |V_1(G')| - 2\epsilon n \geq (1-q'(1-p'+p'q')-3\epsilon)n\,.\] 
Comparing this with the upper bound on $|V_1(G)|$ in~\eqref{eq:upper-bound-V1}, and recalling that $p'\uparrow p$ and $q'\uparrow q$ as $\epsilon\downarrow 0$, thereby concludes the proof of Proposition~\ref{prop:E1-V1-approx}.
\end{proof}

From the proposition and~\eqref{eq:E1-concentrate},\eqref{eq:E1-mean}, we deduce the required estimate on $|E(\GC)|$, while~\eqref{eq:V1-concentrate},\eqref{eq:V1-mean} analogously imply the required estimate on $|V(\GC)|$. 

Furthermore, from~\eqref{eq:approx-E1} we see that with probability $1-\exp(-cn)$, at most $2\epsilon n$ vertices in $G\sim\cG_p$ have a discrepancy between their degree in $\GC$ and that in $E_1(G)$. The statement of Theorem~\ref{thm:giant-2core-deg-dist} that $|D_k/n-\alpha_k|<\epsilon$ thus follows from the fact that, for every $x$ and $1\leq k \leq d$, the probability that $\#\{y : xy\in E_1(G)\}=k$ corresponds to $\alpha_k$ (up to replacing $1-q$ by $1-q+\epsilon$ in that expression),
 as this event occurs iff $x$ has exactly $k$ neighbors in $G$, out of which  there exists at least one vertex $y$ which satisfies $\{ |\cC_{y}(\GC\setminus\{x y\})|\geq R-1\}$ (up to replacing $R-1$ by $R$ in the case $k=1$).

\subsection{The 2-core of the giant component}\label{sec:2-core}
Recall the definition of $\cA_{x,y}$ in~\eqref{eq:def-Axy}, and define
\begin{equation}\label{eq-U2-def} 
\begin{array}{l}
E_2(H) = \{ xy\in E(H) \,:\; \cA_{x,y}^R(H)\cap \cA^R_{y,x}(H)\cap \{xy\in E(H)\} \}\,,\\ 
\noalign{\medskip}
V_2(H) = \{ x\in V(H)\,:\; xy\in E_2(H)\mbox{ for some $y\in V(H)$}\}\,.
\end{array}
\end{equation}
The main result we wish to prove in this subsection is the following characterization of the 2-core of a typical random graph $G \sim \cG_p$. It consists of a local rule for inclusion of vertices and edges in $\TC(G)$, the 2-core of its largest component, which determines it up to at most $\epsilon n$ vertices. We also show that all other components contribute a combined total of at most $\epsilon n$ vertices to the 2-core of $G$. (It is easy to see that, in both cases, a linear error of some $\epsilon' n$ vertices must be allowed, e.g., when every vertex is part of a cycle of length $O(R)$.) 
\begin{proposition}\label{prop:E2-V2-approx}
For every $\epsilon,b>0$ and $d\geq 3$ there exist some $R$ and $c>0$ such that, if $\cG$ is a regular $(b,d)$-expander on $n$ vertices with girth greater than $2R$, and $G\sim \cG_p$, then
\begin{align}
\P\Bigl( \bigl| E_2(G) \xor E(\TC(G)) \bigr| > \epsilon n\Bigr) &\leq \exp(-c n)\,,\label{eq:approx-E2}\\
\P\Bigl( \bigl| V_2(G) \xor V(\TC(G)) \bigr| > \epsilon n\Bigr) &\leq \exp(-c n)\,,\label{eq:approx-V2}
\end{align}
whereas
\begin{align}
\P\Bigl( \sum_{i\geq 2}\bigl| \cC_i^{(2)}(G)  \bigr| > \epsilon n\Bigr) &\leq \exp(-c n)\,.
\label{eq:2core-out-of-gc}	
\end{align}
\end{proposition}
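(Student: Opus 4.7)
The plan is to prove the three bounds in turn, reducing each to an application of Proposition~\ref{prop:E1-V1-approx} (on $G$ or on the sub-percolated graph $G' \sim \cG_{p'}$ from the coupling $G = G'\cup F'$ used in its proof), together with the $k$-thick expansion result (Claim~\ref{clm:G-red}) for the one non-routine step.

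First I establish $E(\TC(G)) \subseteq E_2(G)$ up to an exponentially small error. For $e=xy \in E(\TC(G))$, Theorem~\ref{mainthm:giant-2core-sizes} gives $|V(\TC(G))| \geq \theta_2 n/2 \geq R$ w.h.p. If $e$ is not a bridge in $\GC$, then $\GC\setminus\{e\}$ is connected and both sides contain $|V(\GC)|\geq R$ vertices; if $e$ is a bridge of $\GC$, it is necessarily a bridge of the 2-core joining two 2-connected blocks, each containing a cycle, and each such cycle has length $>2R$ by the girth hypothesis. Either way $\cA_{x,y}^R\cap\cA_{y,x}^R$ holds, so $e\in E_2(G)$.

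For $|E_2(G)\setminus E(\TC(G))|\leq \epsilon n$ I split into two cases. Case~(a), $e\in E_2\setminus E(\GC)$: then $e$ lies in a non-giant component of size $\geq 2R-1$, so $e\in E_1(G)\setminus E(\GC(G))$, a set of size at most $\epsilon n$ w.h.p.\ by Proposition~\ref{prop:E1-V1-approx}. Case~(b), $e\in E_2\cap E(\GC)\setminus E(\TC)$: such $e$ lies in a pendant tree $T$ of $\GC$ (a connected component of the induced forest on $V(\GC)\setminus V(\TC)$, rooted at its unique attachment point in $\TC$), and removing $e$ partitions $V(\GC)$ into the subtree $T_e^-$ hanging below $e$ and the rest, with the $E_2$ condition forcing $|T_e^-|\geq R$. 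Bounding case~(b) is the expected principal obstacle.

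The plan for case~(b) is to apply the $k$-thick expansion. Set $U=\bigcup\{V(T) : T \text{ a pendant tree of }\GC(G)\text{ with }|V(T)|\geq R\}$. The subtrees $T$ in the union are pairwise disjoint connected subsets of $V(\GC)$ of size $\geq R$, so $U$ is an $R$-thick set. In $G$, however, $U$ is connected to $V\setminus U$ only through the pendant-tree roots, contributing exactly one attachment edge per tree in the cover; hence the number of $G$-edges from $U$ to $V\setminus U$ is at most $|U|/R$. An application of Claim~\ref{clm:G-red} (after an appropriate sprinkling split that presents $G$ as a slightly percolated subgraph) forces any linearly-sized $R$-thick subset to have $\geq \delta|U|$ outgoing edges for some $\delta=\delta(b,d,p)>0$; for $R>1/\delta$ this yields $|U|\leq \epsilon n$. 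Since each case~(b) edge corresponds to a unique non-root vertex of some pendant tree with $|V(T)|\geq R$, the count of case~(b) edges is at most $|U|\leq \epsilon n$.

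The remaining bounds follow cleanly. For $|V_2\xor V(\TC)|\leq \epsilon n$: on the event $E(\TC)\subseteq E_2$, every $v\in V(\TC)$ is incident to an edge of $E(\TC)\subseteq E_2$ and so lies in $V_2$, while each $v\in V_2\setminus V(\TC)$ has all its $E_2$-incident edges inside $E_2\setminus E(\TC)$, giving $|V_2\setminus V(\TC)|\leq 2|E_2\setminus E(\TC)|\leq 2\epsilon n$. For $\sum_{i\geq 2}|\cC_i^{(2)}|\leq \epsilon n$: any non-empty $\cC_i^{(2)}$ contains a cycle of length $>2R$ by the girth assumption, forcing $|\cC_i|>2R$; every edge of such a non-giant component belongs to $E_1\setminus E(\GC)$, bounded by $\epsilon n$ via Proposition~\ref{prop:E1-V1-approx}; combining with $|V(\cC)|\leq |E(\cC)|+1$ per component and the trivial bound $n/(2R+1)$ on the number of non-giant components of size $>2R$ (less than $\epsilon n$ for $R$ large) yields the claim.
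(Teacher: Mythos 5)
Your overall reduction scheme---prove $E(\TC(G))\subseteq E_2(G)$, then bound $|E_2(G)\setminus E(\TC(G))|$ directly, then deduce the vertex and small-component statements by counting---is sound, and cases~(a), the $V_2\xor V(\TC)$ bound, and the $\sum_{i\geq 2}|\cC^{(2)}_i|$ bound are all handled correctly. However, the argument for case~(b) contains the central gap, and it is exactly the difficulty the paper's proof was designed to sidestep.

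You want to apply Claim~\ref{clm:G-red} to conclude that the linearly-sized $R$-thick set $U$ of pendant-tree vertices has $\geq\delta|U|$ boundary edges \emph{in $G$}. But Claim~\ref{clm:G-red} establishes $k$-thick expansion in $\cG_\textsc{r}$, which is $\cG$ with a tiny fraction $\hat\epsilon$ of edges deleted; its proof is a union bound over $k$-thick sets in which the factor $(4\hat\epsilon)^{b|S|/2}$ dominates precisely because $\hat\epsilon$ can be made arbitrarily small. Replaying that argument for $G\sim\cG_p$ would require $(1-p)^{b}$ to be small, but the proposition allows arbitrary $b>0$ and arbitrary $p>\tfrac1{d-1}$, so $(1-p)^b$ can be close to $1$ and the union bound fails. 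There is no ``sprinkling split that presents $G$ as a slightly percolated subgraph'' of an expander: the coupling used in the paper writes $G'\sim\cG_{p'}$ as a slightly percolated subgraph \emph{of $G$}, not the other way round, and $G\sim\cG_p$ is emphatically not a slightly percolated subgraph of $\cG$ when $p$ is bounded away from $1$. Indeed, $k$-thick expansion genuinely fails for $\cG_p$: a union of non-giant components of $G$, each of size $\geq k$, is $k$-thick with zero boundary in $G$.

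The paper resolves this by never trying to prove directly that large pendant trees are rare. Instead it establishes the matching \emph{lower bound} $|E(\TC(G))|\geq(1-\epsilon)\tfrac12 dp(1-q)^2 n$ (eq.~\eqref{eq-E2-in-C2+}), which together with the easy inclusion $E(\TC)\subseteq E_2$ and concentration of $|E_2(G)|$ around the same value squeezes $|E_2\setminus E(\TC)|$ to $o(n)$. The lower bound is proved by randomly two-coloring $E(\cG)$ into a thin \textsc{blue} class and a bulk \textsc{red} class, applying $k$-thick expansion \emph{only to $\cG_\textsc{r}$} (where it is valid, since the blue fraction $\hat\epsilon$ is small), sprinkling only red edges to merge large red components, and thereby witnessing that each blue edge in $\widehat{\mathcal{U}}$ lies on a cycle in $\GC$. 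The binomial relation $\widehat M\sim\Bin(M,\hat\epsilon)$ then converts the lower bound on blue 2-core edges into a lower bound on $M=|E(\TC)|$. Your case~(b) cannot be repaired by a more careful union bound; it needs this kind of constructive cycle-producing mechanism (or some other new idea) to go through.

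As a smaller remark, your derivation of $E(\TC(G))\subseteq E_2(G)$ conditions on the high-probability event $|V(\GC)|\geq R$ and splits into bridge/non-bridge cases; the paper's argument is simpler and fully deterministic: if $\cA^R_{x,y}$ fails then $\cC_y(G\setminus\{xy\})$ has fewer than $R$ vertices, hence by the girth hypothesis it is a tree, and the peeling process eats it entirely (ending with $y$ having degree $\leq 1$), so $xy$ cannot survive in the 2-core.
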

\begin{proof}
First observe that, for any graph $H$ with girth greater than $2R$,
\[ \bigcup \left\{ E\big(\cC^{(2)}\big)\,:\; \cC=\cC_x(H) \mbox { for some $x\in V(H)$}\right\} \subset E_2(H)\,;\] 
indeed, for any such  $H$ and edge $xy\in E(H)$, if $\cA^R_{x,y}$ does not hold then the component $\cC_{y}(H\setminus\{xy\})$ has size less than $R$, and hence it is a tree by the girth assumption, so $xy$ cannot  belong to the 2-core. In particular, 
\begin{equation}\label{eq-C2-in-E2}
\bigcup_{i\geq 1} E\big(\cC^{(2)}_i(G)\big) \subset E_2(G)\quad \mbox{ and } \quad
\bigcup_{i\geq 1} V\big(\cC_i^{(2)}(G)\big) \subset V_2(G)
\,.	
\end{equation}
Establishing the following bound (for some $R,c$ depending on $b,d,\epsilon$) will allow us to conclude the proof:
\begin{equation}\label{eq-E2-in-C2+}
	 \P\left( | E(\TC(G)) | > (1- \epsilon) \tfrac12 dp (1-q)^2 n\right) = 1-O(\exp(-cn))\,.
	  \end{equation}
In order to see that this indeed implies the statement of the proposition, note that, as in the argument preceding~\eqref{eq:E1-concentrate}, applying Hoeffding's inequality to the appropriate edge-exposure martingale (where the Lipschitz constant is as before) implies that, if $G\sim \cG_p$ then
\begin{equation}\label{eq:E2-concentrate} \P\left(\left|E_2(G)|-\E[|E_2(G)|]\right|\geq a\right) \leq \exp\left(-\frac{a^2}{4 d n (d-1)^{2R}}\right)\,,
\end{equation}
where
\begin{equation}\label{eq:E2-size} \tfrac12 d p (1-q)^2 n \leq   \E[|E_2(G)|]\leq \tfrac12 d p (1-q+\epsilon)^2 n \,;\end{equation}
thus, the combination of \eqref{eq-C2-in-E2}--\eqref{eq-E2-in-C2+} will indeed imply~\eqref{eq:approx-E2}, and in particular will also give~\eqref{eq:approx-V2}, as well as the upper bound~\eqref{eq:2core-out-of-gc} on the cumulative size of the 2-cores of  $\cC_i^{(2)}(G)$ for $i>1$. 

It remains to prove~\eqref{eq-E2-in-C2+}. 
As before, let $p'=p-\epsilon > 1/(d-1)$ where $\epsilon'=\epsilon/(1-p')$ such that, if we take
$G'\sim \cG_{p'}$ and  $F'\sim \cG_{\epsilon'}$ then we can produce $G\sim \cG_p$ via $E(F')\cup E(G')$. The key to the proof would be to randomly partition $E(\cG)$ into $E_\textsc{b}\cup E_\textsc{R}$ --- denoting $E_\textsc{b}$ as \textsc{blue} edges and  $E_\textsc{r}$ as \textsc{red} edges --- and then connect the appropriate proportion of edges $(x,y)\in E(\cG)$ in the giant component $\GC(G)$ via one \textsc{blue} edge $e$ and one \textsc{red} path to establish that $e\in E(\TC(G))$. In what follows, if $H$ is a subgraph of $\cG$, we let $H_\textsc{b}$ denote the (\textsc{blue}) subgraph of $H$ whose edges are $E(H)\cap E_\textsc{b}$ and, similarly, we let  $H_\textsc{r} $ denote the (\textsc{red}) subgraph of $H$ whose edges are $E(H)\cap E_\textsc{r}$.

\begin{remark}
In principle, to show that $e\in E_2(G')$ is also in $E(\TC(G))$, one would only need to show that $e\in \GC$ and provide a path that connects the two clusters --- each of size at least $R$ --- that are at its endpoints \emph{without using said edge}. However, we must resort to witnesses in the form of a \textsc{red} path between the endpoints $x,y$ of the edge $e$ which itself is \textsc{blue} due to our mechanism to guarantee that $e\in \GC$: the latter uses sprinkling (new edges in $F'$) to connect large clusters (ones of size at least $R$) to one another, and potentially might use, for instance, the edge $e$ to connect the clusters of $x$ and $y$. To remedy this, we independently color each edge of $\cG$ in \textsc{blue} with some probability $\hat\epsilon$, for a lower bound on the number of \textsc{blue} edges in $\TC(G)$, which will turn out to be asymptotically tight, despite insisting on sprinkling only \textsc{red} edges to connect the large clusters (see Remark~\ref{rem:sprinkling-red}).
\end{remark}

The random partition $E(\cG)=E_\textsc{b}\cup E_\textsc{r}$ is defined as follows: we let each $e\in E(\cG)$, independently, belong to $E_\textsc{b}$ with probability $\hat\epsilon$, given by
\begin{equation}
\label{eq:epsilon-hat-def}	
\hat\epsilon := 3d\sqrt{\epsilon}\,,
\end{equation}
and further suppose that
\begin{equation}
	\label{eq:epsilon-hat-small}
 \hat\epsilon < \tfrac14 (24d)^{-2/b}\,,
\end{equation}
which we may assume w.l.o.g.\ since the event we wish to estimate in~\eqref{eq-E2-in-C2+} is monotone in $\epsilon$.

Our goal will be to show that the edges in the set
\[ \widehat{\mathcal{U}}:= \left\{ e\in E(G'_\textsc{b}) \,:\; \cA^R_{x,y}(G'_\textsc{r}) \cap \cA^R_{y,x}(G'_\textsc{r})\mbox{ holds}\right\}
\]
represent, up to an arbitrarily small error, an $\hat\epsilon$-proportion of the 2-core $\TC(G)$. 
This would entail adapting our strategy of connecting small components via sprinkling to be restricted to \textsc{red} edges (so we could guarantee $e\in \widehat{\mathcal{U}}$ would be part of a cycle), towards which we introduce the following notion.
\begin{definition}[$k$-thick subsets]\label{def:k-thick} We say a subset $S$ of vertices of a graph $H$ is $k$-thick if there exists a collection $\{S_i\}$ of disjoint connected subsets of $H$, each of size at least $k$, such that $S = \bigcup S_i$.
\end{definition}
The idea behind this definition is that, although $\cG_\textsc{r}$ is not an expander --- for instance, it contains a linear proportion of isolated vertices --- w.h.p., sets that are $k$-thick maintain edge expansion in $\cG_\textsc{r}$:
\begin{claim}\label{clm:G-red}
Let $\cG$ be a regular $(b,d)$-expander on $n$ vertices,  let  $\epsilon>0$ and set $E_\textsc{b},E_\textsc{r}$ as above for  $\hat\epsilon$ as in~\eqref{eq:epsilon-hat-def}. There exists $k(\epsilon,b,d)$ such that, with probability $1-O(2^{-\epsilon n})$,
\begin{equation}\label{eq:expansion-in-red} \#\{(x,y)\in E_\textsc{r} \,:\; x\in S\,,\, y\in S^c\} \geq \tfrac12 b|S|\quad\mbox{ for every $k$-thick $S\subset V(\cG)$ with $\epsilon n \leq |S|\leq n/2$}\,.\end{equation}
\end{claim}
\begin{proof}
By the edge expansion assumption, the probability that a given $k$-thick subset $S$ as above --- denoting its size by $s$ --- violates the statement of the claim, is at most
\[ \binom{\lceil b s\rceil }{\lceil  b s/2\rceil } \hat\epsilon^{bs/2} \leq \left[(4+o(1))\hat\epsilon\right]^{bs/2}\,. \]
On the other hand, if $m=\lceil s/k\rceil$, then there are at most $2^{2s} \binom{n}{m} (ed)^{s}$ such subsets, as we may enumerate over all possible ways to write $s=s_1+\ldots+s_m$ for the subset sizes (some possibly empty; there are at most $m$ components since nonempty ones have size at least $k$), and then for each subset $S_i$ we first root it in some $v_i\in V(\cG)$ --- the total number of choices of these roots would be $\binom{n}m$ --- and then specify its spanning tree out of at most $(ed)^{s_i}$ options (as all degrees are at most $d$; see, e.g.,~\cite[Lemma 2]{BFM98}).
% ${s+m-1 \choose m-1} \binom{n}{m} (ed)^{s}$

Hence, the probability that there exists some set $S$ violating the statement of the claim is at most
\[ \sum_{\epsilon n \leq s \leq n/2} \bigg[(1+o(1)) \left(\frac{e n}{s/k}\right)^{1/k+o(1)} 4ed (4\hat\epsilon)^{b/2}\bigg]^s \leq \sum_{s\geq \epsilon n} \left[ 12d (4\hat\epsilon)^{b/2}\right]^s \leq \sum_{s\geq \epsilon n} 2^{-s} = O(2^{-\epsilon n})\,,
\]
where we used that $k$ is large enough such that $4e(e\epsilon^{-1} k)^{1/k} <12$, as well as~\eqref{eq:epsilon-hat-small}.
\end{proof}
Using the above claim, we can produce a version of Claim~\ref{clm:menger} that will only consider $E_\textsc{r}$ for sprinkling.
\begin{claim}\label{clm:menger-red}
For every $b,d,\epsilon>0$ there are $k,c>0$ such that the following holds. If $\cG$ is a regular $(b,d)$-expander on $n$ vertices,  $E(\cG) = E_\textsc{b}\cup E_\textsc{r}$ satisfying~\eqref{eq:expansion-in-red}, $\mathcal{S}$ is a family of disjoint connected subsets of $\cG$, each of size at least $k$,  
and $H \sim \cG_{\epsilon}$ independently,
then with probability $1-\exp(-cn)$, every two subsets $\mathcal{S}_1,\mathcal{S}_2$ of $\mathcal{S}$, with at least $\epsilon n$ vertices in each, are connected by a path in $H_\textsc{r}$.
\end{claim}
\begin{proof}
Each of the subsets $\mathcal{S}_1$ and $\mathcal{S}_2$ addressed by the claim is by definition $k$-thick in $\cG$ (by the hypothesis on $\mathcal{S}$) and contains at least $\epsilon n$ vertices. Thus, for any such $\mathcal{S}_1$ and $\mathcal{S}_2$, if $(U,U^c)$ is a minimal cut separating these subsets of vertices in $\cG_\textsc{r}$ (i.e., $\mathcal{S}_1\subset U$ and $\mathcal{S}_2\subset U^c$ for $U$ minimizing the number of edges between $U,U^c$ in $E_\textsc{r}$) then w.l.o.g.\ $U$ is  also $k$-thick and $|U|\leq n/2$ (suppose $U$ is the smaller part of the cut; we proceed by moving vertices that are not connected to $\mathcal{S}_1$ from $U$ to $U^c$, noting that this would not increase the cut size). Hence,~\eqref{eq:expansion-in-red} implies that there are, with probability $1-O(2^{-\epsilon n})$, at least $\frac12\epsilon bn$ edges in this cut. By Menger's Theorem, as in the proof of Claim~\ref{clm:menger}, we thus conclude that~\eqref{eq:short-paths-in-G} holds for $A=\mathcal{S}_1$ and $B=\mathcal{S}_2$ with $b/2$ replacing $b$, and the rest of the argument in the proof of Claim~\ref{clm:menger} holds as before (with $b$ replaced by $b/2$).
\end{proof}
Following the same short proof of Corollary~\ref{cor:V1-after-sprinkling} with the single modification of using the above claim instead of Claim~\ref{clm:menger} now yields its following analog w.r.t.\ sprinkling only \textsc{red} edges.

\begin{corollary}\label{cor:V1-after-sprinkling-red}
For every $b,d,\epsilon>0$ there are $c,R>0$ so that the following holds. If $\cG$ is a regular $(b,d)$-expander on $n$ vertices with girth greater than $2R$, and  $E(\cG) = E_\textsc{b}\cup E_\textsc{r}$ satisfies~\eqref{eq:expansion-in-red}, then with probability $1-O(e^{-cn})$, there is a component $\cC$ of $G'_\textsc{r} \cup F'_\textsc{r}$ containing all but at most $2\epsilon n$ vertices of
\[\overline{V}_1(G'_\textsc{r}) := \left\{ y \in V(G) \,:\; \cA_{x,y}^{R}(G'_\textsc{r})\mbox{ holds for some }x\in V(G)\right\}\,.\]
\end{corollary}

Condition on the partition of $E(\cG)$ into $E_\textsc{b}\cup E_\textsc{r}$, and suppose that~\eqref{eq:expansion-in-red} holds (which occurs with probability $1-O(\exp(-cn))$ as per Claim~\ref{clm:G-red}). 
Observe that if $e=(x,y)\in \widehat{\mathcal{U}}$ then $x,y\in \overline{V}_1(G'_\textsc{r})$ thanks to the events $\cA^R_{y,x}(G'_\textsc{r}),\cA^R_{x,y}(G'_\textsc{r})$, respectively.
By the last corollary, with probability $1-O(\exp(-cn))$, all the edges of $\widehat{\mathcal{U}}$ up to at most $2d\epsilon n$ will thus belong to some component $\cC$ of $G'\cup F'_\textsc{r}$. Moreover, each such edge is \textsc{blue}, and its endpoints will be connected in $\cC$ by a \textsc{red} path (in $G'_\textsc{r} \cup F'_\textsc{r}$). In particular, with probability $1-O(\exp(-cn))$, all but at most $2d\epsilon n$ edges of $\widehat{\mathcal{U}}$ are in the 2-core of  $\cC$.

Note that, as argued above~\eqref{eq:E1-mean}, the girth assumption on $\cG$ implies that the events $\cA^R_{x,y}$ and $\cA^R_{y,x}$ are independent, and by their definition these are also independent of the event $\{(x,y)\in E_\textsc{b}\}$ (which occurs with probability $\hat\epsilon$). Therefore, our bounds on $\P(\cA^R_{x,y})$ yield
\[ \tfrac12 d  p' (1-q')^2 \hat\epsilon n \leq  \E|\widehat{\mathcal{U}}| \leq \tfrac12 d  p' (1-q'+\epsilon)^2 \hat\epsilon n \,,\]
and, as argued before, the standard concentration estimate via the edge-exposure martingale yields
\[ \P\left(\big||\widehat{\mathcal{U}}|-\E|\widehat{\mathcal{U}}|\big|\geq a\right) \leq \exp\left(-\frac{a^2}{4 d n (d-1)^{2R}}\right)\,,\]
so that, in particular, for some $c(\epsilon,d,R)>0$ and every sufficiently large $n$,
\[ \P\left(|\widehat{\mathcal{U}}|\geq \tfrac12 dp'(1-q')^2\hat\epsilon n - \epsilon n\right) = 1 - O(\exp(-c n))\,.\]
In conclusion, the aforementioned single (linear) component $\cC$ with most of the edges of $\widehat{\mathcal{U}}$ must w.h.p.\ be the largest component $\GC$ in light of our result in the previous section. 

Denoting the number of edges and \textsc{blue} edges in the 2-core of $\GC(G)$, respectively, by
\[ M := \left| E(\TC(G)) \right| 
\quad\mbox{ and }\quad 
\widehat M := \left| E(\TC(G)) \cap E_\textsc{b} \right|\,,\]
we conclude from the above analysis of $\widehat{\mathcal{U}}$ that, for some $c(\epsilon,d,R)>0$,
\[ \P\left(\widehat M \geq \tfrac12 d p'(1-q')^2 \hat\epsilon n - (2d+1)\epsilon n\right) = 1-O(\exp(-cn))\,.\] 
At the same time, since the partition $E(\cG) = E_\textsc{b}\cup E_\textsc{\red}$ was performed independently of $G\sim\cG_p$, we have that, conditional on $G$, the random variable $\widehat M$ is distributed as $\mathrm{Bin}(M,\hat\epsilon)$. In particular, 
\[ \P \left( \widehat M \leq M \hat\epsilon +\epsilon n \mid G\right) \geq 1-\exp\left[-(\epsilon n)^2/(2 M)\right] \geq 1-\exp\left[-(\epsilon^2/d) n\right] \]
by Hoeffding's inequality and the fact that $M\leq dn/2$ deterministically. Combining the last two inequalities, it follows that, for some $c(\epsilon,d,R)>0$,
\[ \P\left( M \geq \tfrac12 dp'(1-q')^2 n - (2d+2)(\epsilon /\hat\epsilon) n\right) = 1 - O(\exp(-cn))\,,
\]
and the fact that $(2d+2)/(\epsilon/\hat\epsilon) < \sqrt{\epsilon}$ by our definition of $\hat\epsilon$ in~\eqref{eq:epsilon-hat-def} 
(so that, with this probability, $M \geq \tfrac12 dp'(1-q')^2 n - \sqrt{\epsilon} n$, where as before, $\tfrac12 dp'(1-q')^2 \to\tfrac12 dp(1-q)^2$ as $\epsilon\to 0$) now establishes~\eqref{eq-E2-in-C2+} and thereby concludes the proof of Proposition~\eqref{prop:E2-V2-approx}.
\end{proof}

\begin{remark}\label{rem:2-core-bridges}
Note that our lower bound on $|\TC(G)|$ --- which was tight up to at most $\epsilon n$ vertices --- consisted of edges in the 2-core that lie on a cycle; thus, w.h.p.\ there are at most $\epsilon n$ bridges in $\TC(G)$.
\end{remark}

\begin{remark} \label{rem:sprinkling-red}
A subtler aspect of the lower bound on $|\TC(G)|$ is that counted edges that are \textsc{blue} and lie on cycles that, apart from this edge, are entirely \textsc{red}; up to an arbitrarily small error, this matched the correct number of edges in $\TC(G)$, where edges may lie on cycles that have mixed colors, thus the effect of such mixed cycles is negligible! It is important to note that this is \emph{not} the case for the number of such cycles (e.g., consider the case when the girth tends to infinity), but rather for the sake of determining whether a given edge is contained in such a cycle.\end{remark}

From Proposition~\ref{prop:E2-V2-approx}, Eq.~\eqref{eq:approx-E2}, and~\eqref{eq:E2-concentrate}--\eqref{eq:E2-size}, we deduce the required estimate on $|E(\TC)|$. 
Moreover, with probability $1-\exp(-cn)$, at most $2\epsilon n$ vertices in $G\sim\cG_p$ have a discrepancy between their degrees in $\TC(G)$ and in $E_2(G)$. As before, the statement of Theorem~\ref{thm:giant-2core-deg-dist} that $|D_k^*/n-\beta_k|<\epsilon$ follows from the fact that, for every $x$ and $k\geq 2$, the probability that $\#\{y : xy\in E_2(G)\}=k$ corresponds to $\beta_k$, as this occurs iff $x$ has exactly $k$ neighbors $y$ in $G$ such that $\{ |\cC_{y}(\TC\setminus\{x y\})|\geq R\}$.
(By~\eqref{eq-q-def}, the expression for $\beta_k$ in~\eqref{def:beta_k} equals
$\binom{d}{k}   p^k (1-q)^k \left(1-p+pq\right)^{d-k}$, i.e., $\P(\Bin(d,p(1-q))=k)$.)

\section{Second largest component}

\subsection{Proof of Theorem~\ref{mainthm:second-comp}}\label{subsec:second-comp}
Fix $d\geq 3$ and $R\geq 1$, and let $\frac1{d-1}<p<1$ and $0<\alpha<1$. 
We need the following result, which (although it
may be proved directly) follows immediately from our results in~\S\ref{sec:asymp}.
\begin{claim}\label{clm:expander-matching}
For every $b>0$, $d\geq 3$ and $\frac1{d-1}<p<1$ there exist some $c,\delta,\epsilon,R>0$ such that, if $\cG=(V,E)$ is a regular $(b,d)$-expander on $n$ vertices with girth at least $R$, then there exists a subset $M$ of at least $\delta n$ vertex-disjoint edges of $\cG$ such that the graph $\cG_0=(V,E\setminus M)$ satisfies that $G_0\sim (\cG_0)_p$ contains a connected component of size at least $\epsilon n$ with probability $1-O(e^{-c n})$.
\end{claim}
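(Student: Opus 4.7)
The plan is to apply the machinery underlying Proposition~\ref{prop:E1-V1-approx} directly to $\cG_0:=(V,E\setminus M)$, after checking that deleting a small matching preserves the two ingredients of that proof: edge expansion on linear scales, and supercriticality of the local cluster-growth process.

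First, I would choose $M$: since $\cG$ is $d$-regular, Vizing's theorem yields an edge-colouring with at most $d+1$ colours and hence a matching of size at least $dn/(2(d+1))$; take any sub-matching of size $\delta n$, with $\delta>0$ to be fixed below. Then $\cG_0$ has girth at least $R$ (deletion can only raise the girth), maximum degree $d$, and minimum degree $d-1$. For edge expansion, every $S\subseteq V$ with $|S|\le n/2$ satisfies $|E_{\cG_0}(S,S^c)|\ge b|S|-\delta n$, so for $\delta$ smaller than a suitable constant times $b\epsilon$ one retains edge expansion $b/2$ on all sets of size at least $\epsilon n$. This is precisely the regime consulted by Claim~\ref{clm:menger}, so its conclusion---and in turn Corollary~\ref{cor:V1-after-sprinkling}---goes through for $\cG_0$ with $b/2$ in place of $b$.

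Next, fix $\hat p\in(1/(d-1),p)$ and analyse the local event $\cA^R_{x,y}$ in $(\cG_0)_{\hat p}$. Since $\cG_0$ has girth at least $R$, the $R$-ball of any vertex is a tree, and the depth-$R$ survival probability is that of a multi-type Galton--Watson tree whose offspring distribution at a given node is $\Bin(d-1,\hat p)$ or $\Bin(d-2,\hat p)$ according as the node lies outside or inside $V(M)$ (whose density is at most $2\delta$). The average branching is at least $(d-1)\hat p-2\delta\hat p$, which exceeds $1$ provided $\delta<((d-1)\hat p-1)/(2\hat p)$; for such $\delta$, the survival probability is bounded below by some $1-q^{*}>0$ uniformly over the starting vertex as $R\to\infty$.

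With these two inputs, the proof of Proposition~\ref{prop:E1-V1-approx} runs verbatim on $\cG_0$: writing $G_0=G_0''\cup F_0''$ with $G_0''\sim(\cG_0)_{\hat p}$ and $F_0''\sim(\cG_0)_{(p-\hat p)/(1-\hat p)}$, the edge-exposure martingale (whose Lipschitz constant depends only on $d$ and $R$) concentrates $|\overline V_1(G_0'')|$ around its mean of at least $(1-q^{*}-o(1))n$, and the $\cG_0$-analogue of Corollary~\ref{cor:V1-after-sprinkling} patches all but $O(\epsilon n)$ of these vertices into a single component of $G_0\sim(\cG_0)_p$, of size at least $\epsilon n$ for an appropriate $\epsilon>0$, with probability $1-O(e^{-cn})$. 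The only genuine obstacle is the local analysis: because $\cG_0$ is no longer regular, the Galton--Watson computation of \S\ref{sec:giant} must be replaced by its multi-type variant, and one must confirm that the $V(M)$-induced perturbation does not drive the process below criticality; this is exactly what pins down how small $\delta$ needs to be (only in terms of the slack $(d-1)p-1$), while all other steps are cosmetic adjustments of constants from \S\ref{sec:giant}.
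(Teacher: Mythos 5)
The paper's own proof is a three-line reuse of the \textsc{blue}/\textsc{red} machinery from \S\ref{sec:2-core}: colour each edge \textsc{blue} independently with probability $\hat\epsilon$, invoke Corollary~\ref{cor:V1-after-sprinkling-red} to obtain a linear component already inside the \textsc{red} subgraph $\cG_\textsc{r}$ after percolation, and extract the matching $M$ greedily from the linearly many \textsc{blue} edges; since $M\cap E_\textsc{r}=\emptyset$ one has $\cG_0\supseteq\cG_\textsc{r}$, so the linear component survives. Your proposal takes a genuinely different route — a deterministic Vizing matching, followed by a full re-derivation of Proposition~\ref{prop:E1-V1-approx} on the modified, no-longer-regular graph $\cG_0$ — and this route has a real gap.

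The gap is in the local analysis. You claim that with $\delta<((d-1)\hat p-1)/(2\hat p)$, ``the survival probability is bounded below by some $1-q^{*}>0$ \emph{uniformly} over the starting vertex as $R\to\infty$.'' This is false, and the ``average branching exceeds $1$'' criterion is the wrong quantity. Whether the cluster of a fixed $x$ survives to depth $R$ in $(\cG_0)_{\hat p}$ is governed by percolation on the \emph{deterministic} tree $T_x$ (the $R$-ball of $x$ in $\cG_0$), not by an averaged branching rate: if the sub-matching you deleted happens to cover every vertex of $B_R(x)$ — which is perfectly possible once $n$ is large, since $|V(M)|=2\delta n\gg (d-1)^R$ — then every node of $T_x$ has offspring at most $\Bin(d-2,\hat p)$. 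For $d=3$ and $\hat p$ close to $\tfrac12$ one has $(d-2)\hat p<1$, so $T_x$ is subcritical and $\P(\cA^R_{x,y})\to 0$ as $R\to\infty$ at such $x$. ``Take any sub-matching of size $\delta n$'' therefore does not work: an adversarial choice can poison a constant fraction $\asymp \delta/(d-1)^R$ of the vertices, so the condition on $\delta$ you isolated does not control the expected size of $\overline V_1$. What is actually needed is that \emph{most} vertices have an $R_0$-ball (for some $R_0\le R$ large) disjoint from $V(M)$, which requires $\delta\ll(d-1)^{-R_0}$ — a constraint tying $\delta$ to $R$ that you never impose — or else a matching chosen to be suitably spread out, which you never arrange. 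The paper sidesteps all of this: because $M$ lives in the independently-sampled \textsc{blue} edges and the giant is built entirely in the \textsc{red} subgraph, the local analysis stays on the original regular $\cG$ at percolation parameter $p(1-\hat\epsilon)$, so no multi-type or non-regular variant of the Galton--Watson computation ever arises.

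Your edge-expansion check for $\cG_0$ (deleting a matching costs at most $\delta n$ per cut, so expansion $b/2$ is retained on sets of size $\ge\epsilon n$ once $\delta\le b\epsilon/2$) is fine and would let Claim~\ref{clm:menger} go through; the problem is localized to the Galton--Watson part. Even granting a correct fix there, the proposal would still have to carry through the concentration and sprinkling arguments on a non-regular base graph, whereas the paper's proof simply quotes a corollary it has already established — so the two proofs differ not just in correctness but substantially in economy.
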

\begin{proof}
Using the notation in~\S\ref{sec:2-core}, let each edge in $\cG$ be \textsc{blue} independently with probability $\hat\epsilon$, as defined in~\eqref{eq:epsilon-hat-def} and \textsc{red} otherwise; denote the \textsc{red} and \textsc{blue} edges by $E_{\textsc{r}}$ and $E_{\textsc{b}}$, respectively.
Corollary~\ref{cor:V1-after-sprinkling-red} guarantees that, with probability $1-O(e^{-cn})$, the red graph $\cG_\textsc{r}=(V,E_{\textsc{r}})$ satisfies that $G_{\textsc{r}}\sim (\cG_{\textsc{r}})_p$ contains a component of size $\epsilon n$ for some $\epsilon>0$. Finally, $E_{\textsc{b}}$ is of size at least $(\hat\epsilon/2)dn/2 $ with probability  $1-O(e^{-cn})$, whence, in particular, it contains a matching $M$ of size at least $\hat\epsilon n/4$. 
\end{proof}

Our construction of a regular $(b,d)$-expander $\cG$ on $n$ vertices is as follows.
\begin{itemize}
	\item Fix some arbitrary $b_1>0$. Let $R_1$ be the maximum of $R$ and the girth requirement from Theorem~\ref{mainthm:giant-2core-sizes} w.r.t.\ $\epsilon=\theta_1/2$, and construct an expander $H_1$ on $n_1 \asymp n^\alpha$ vertices with girth at least $R_1$, where $\hat{n}_1 \asymp  n^{\alpha}$  vertices have degree $2$ and the rest have degree $d$, in the following way:
	As per Claim~\ref{clm:expander-matching}, let $H_1$ be obtained from a regular $(b_1,d)$-expander on $n_1 = \lfloor  n^{\alpha}\rfloor $ vertices with girth at least $R_1$ by subdividing each of the edges of $M$ (given by that claim) into a path of length 2 via a new degree-2 vertex (for a total of $\hat n_1 \asymp n^\alpha$ new degree-2 vertices).
	Let $\hat{V}_1$ denote the $\hat n_1$ vertices of degree $2$ in $H_1$.
	
	\item On each vertex $v\in \hat V_1$ in $H_1$: connect it via new edges to $d-2$ new $(d-1)$-ary trees $T_v$, each of depth \[ h=\left\lceil\frac12(1-\alpha)\log_{d-1} n \right\rceil\,.\] 
%	\noindent (Note that the combined number of leaves in all trees $T_v$ is of order $n^{\frac12(1+\alpha)} $ .)
	\item Let $F$ be a $d$-regular graph on $m$ vertices, for some fixed $m$, with  girth at least $R$ --- e.g., for concreteness, the Erd\H{o}s--Sachs~\cite{ES63} graph --- and an arbitrary edge $(x,y)$ removed from it. Take
	\[ L = \left\lceil\frac{2+\alpha}{(1-\alpha)\log_{d-1}(1/p)} \right\rceil \,,\]
	and replace every edge $(a,b)$ of each $(d-1)$-tree $T_v$ by a path of $L$ copies of $F$ (with $\{(x_i,y_i)\}_{i=1}^L$ denoting their deleted edges), where adjacent copies have $y_i$ connected to $x_{i+1}$ by a new edge, the vertex  $a$ is connected by a new edge to $x_1$, and $b$ is connected by a new edge to~$y_L$. 
	
	Note that this did not modify any of the degrees in the original vertices of the $(d-1)$-ary tree $T_v$ (which are thus $d$ everywhere except for the degree-1 leaves). 

	\item Identify each of the $\hat n_2 \asymp n^{\frac12 (1+\alpha)}$ total leaves in all trees $T_v$ as above, in an arbitrary way, to the vertices of one final regular $b_2$-expander $H_2$ 
with $\hat{n}_2$ vertices of degree $d-1$ and $n_2$ vertices of degree $d$, whose girth is at least $R_1$, where $n_2\asymp n$ is such that altogether there are $n$ vertices.
\end{itemize}
It is easy to see that the $\cG$ is a regular $(b,d)$-expander for some fixed $b>0$, depending only on $b_1,b_2,d$, as every set of $s$ vertices must have at least $s/3$ vertices belong either to $H_1$, or to the interior vertices of the trees $\{ T_v: v\in H_1\}$, or to $H_2$, thus the required expansion can be inferred from the one within that corresponding expander (out of the three) by itself. It remains to examine $\cG_p$.

By Theorem~\ref{mainthm:giant-2core-sizes}, the size of the largest component $\GC$ in $(H_1)_p$ is of order $ n^{\alpha}$ (thanks to Claim~\ref{clm:expander-matching}) with probability at least $1-\exp(-c n^{\alpha})$. Each vertex $v$ of this component connects to $H_2$ in $\cG_p$ (i.e., there is a path in $(T_v)_p$ from the root to one of the leaves) with probability at most $(d-1)^h p^{h L}$; thus, the probability that $H_1$ is connected to $H_2$ in $\cG_p$ is at most
\[ \hat n_2 \, p^{h L} \asymp n^{(1+\alpha)/2} p^{h L} \leq n^{-1/2}=o(1)\,.\] Furthermore, the size of this augmented component in $\cG_p$ is stochastically dominated by the combined size of  $O(n^{\alpha})$ i.i.d.\ GW-trees with offspring distribution $\Bin(d-1,p^L)$ (multiplied by an extra factor of $L|V(F)|=O(1)$  due to the $L$ copies of the graph $F$). Since $p^L<d^{-2}$, in particular these GW-trees are subcritical, hence the size of this component is of order $n^{\alpha}$ w.h.p.

On the other hand, the entire graph has girth at least $R$ and is a $d$-regular expander, hence contains a linear component with probability at least $1-\exp(-c n)$ by Theorem~1.
\qed

\begin{remark} In the above construction, unlike the case of $\cG(n,p)$, some edges can have a polynomially small probability of belonging to the giant, even though the girth can be made arbitrarily large and in particular the local neighborhoods (to an arbitrary distance) look all alike.	
\end{remark}

\subsection{Proof of Theorem~\ref{mainthm:q-benjamini}}

The basic building block in the construction, which was already used for the construction in the proof of Theorem~\ref{mainthm:second-comp}, is the tree  $T(k,h,h^*,L^*)$, for integers $k\geq 2$ and $h,h^*,L^*\geq 1$, obtained by
\begin{enumerate}[(a)]
\item taking a $k$-ary tree with $h$ levels, and
\item subdividing all edges between parent nodes in the last $h^*$ levels (i.e., in every level $j\geq h - h^*$) and their children to a path of length $L^*$. 
\end{enumerate}
Let $0<p<1$, fix some sufficiently small $\epsilon>0$, and set
 \begin{equation}\label{eq:cor3-d-def}
 d := \lceil (1/p)^{2/\epsilon}\rceil\,, 	
 \end{equation}
and define
\[ h_n := \lfloor \log_d n\rfloor \,,\quad \alpha := \left\lceil \epsilon^{-1} \log_{1/p} d\right\rceil\,,\quad \beta:= \left\lceil 2\log_{1/p} d\right\rceil\,.\]
With these notations, our construction is the following graph $\cG$ on $|V(\cG)|\asymp n$ vertices.
\begin{enumerate}[(i)]
	\item Let $ T_1 = T(d, h_n, \lfloor\epsilon h_n\rfloor, \alpha) $. Denote its root by $v$ and its leaves by $\{u'_1,\ldots,u'_{N}\}$ (so $n/d< N \leq n$).

	\item Let 
$ T_2 =  T(d,h_n, \lfloor\epsilon h_n\rfloor, \beta)$
 and denote its leaves by $\{u''_1,\ldots,u''_{N}\}$.

\item Let $F$ be the graph formed by taking some (arbitrary) $d$-regular expander $F_0$ on the vertex set $\{w_1,\ldots,w_{N}\}$, and then subdividing each edge of $F_0$ into a path of length $\beta$.

\item Add the edges $(u'_i,w_i)$ and $(u''_i,w_i)$ for all $i=1,\ldots,N$.
\end{enumerate}
\begin{claim}\label{clm:cons3-diam(G)}
For large enough $n$, the graph 
$\cG$ above satisfies $\diam(\cG) \leq (1+6\epsilon) \log_{1/p} n$.
\end{claim}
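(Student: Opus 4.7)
The plan is to carefully bound the depths of $T_1$ and $T_2$ and then control $d(x,y)$ for arbitrary $x,y \in V(\cG)$ via a case analysis based on where $x$ and $y$ sit.

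First I would compute the depths $D_1 := \mathrm{depth}(T_1)$ and $D_2 := \mathrm{depth}(T_2)$. With $h^* := \lfloor \epsilon h_n\rfloor$, these are $D_1 = (h_n - h^*) + h^*\alpha$ and $D_2 = (h_n - h^*) + h^*\beta$. The choice $d \geq (1/p)^{2/\epsilon}$ in~\eqref{eq:cor3-d-def} gives $\log_{1/p} d \geq 2/\epsilon$, hence $h_n \leq (\epsilon/2)\log_{1/p} n$; combined with $\alpha \leq \epsilon^{-1}\log_{1/p} d + 1$, $\beta \leq 2\log_{1/p} d + 1$, and the identity $h_n \log_{1/p} d \leq \log_{1/p} n$, elementary arithmetic yields
\[ D_1 \leq \bigl(1 + \tfrac{\epsilon}{2}\bigr)\log_{1/p} n + O\bigl(\epsilon^2 \log_{1/p} n\bigr), \qquad D_2 \leq \tfrac{5\epsilon}{2}\log_{1/p} n + O\bigl(\epsilon^2 \log_{1/p} n\bigr). \]
In particular $D_1 + 2 D_2 \leq \bigl(1 + \tfrac{11\epsilon}{2}\bigr)\log_{1/p} n + O\bigl(\epsilon^2 \log_{1/p} n\bigr)$, which is at most $(1+6\epsilon)\log_{1/p} n$ for $\epsilon$ sufficiently small.

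The heart of the proof is to show $d(x,y) \leq D_1 + 2 D_2 + O(1/\epsilon)$ for every pair $x, y \in V(\cG)$. The essential primitive is the $T_2$-shortcut: for any labels $i, j \in [N]$ one has the available path $u'_i \to w_i \to u''_i \to (\text{within } T_2) \to u''_j \to w_j \to u'_j$, so $d(u'_i, u'_j) \leq 4 + d_{T_2}(u''_i, u''_j) \leq 4 + 2 D_2$ independently of $i, j$; in particular $d(u'_i, u''_j) \leq 2 + 2 D_2$. This drastically beats the diameter of $F$ itself, which is $\Theta(\log_{1/p} n)$ because each edge of $F_0$ has been subdivided into a path of length $\beta$. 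With this primitive in hand I would split into cases:
\begin{enumeratei}
\item If $x, y \in T_1$: take the minimum of the direct $T_1$-path (length $\leq \mathrm{depth}_{T_1}(x) + \mathrm{depth}_{T_1}(y)$, via the LCA) and the shortcut $x \to u'_i \to \cdots \to u'_j \to y$ using two descendant leaves and a $T_2$-bridge, of length $\leq (D_1 - \mathrm{depth}_{T_1}(x)) + 4 + 2 D_2 + (D_1 - \mathrm{depth}_{T_1}(y))$. Balancing these two expressions gives $d(x,y) \leq D_1 + D_2 + 2$.
\item If $x \in T_1$ and $y \in T_2$: descend $x$ to a descendant leaf $u'_i$, cross to $u''_i$ via $w_i$, and then traverse $T_2$ to $y$ (in the worst case going up to the root of $T_2$ and back down, contributing at most $D_2 + \mathrm{depth}_{T_2}(y) \leq 2 D_2$ in that last leg). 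This yields $d(x,y) \leq D_1 + 2 + 2 D_2$.
\item All remaining cases (both endpoints in $T_2$, or at least one endpoint in $F$) satisfy $d(x,y) = O(D_2 + 1/\epsilon)$, absorbed into the slack.
\end{enumeratei}
Combining the cases gives $\diam(\cG) \leq D_1 + 2 D_2 + O(1/\epsilon) \leq (1+6\epsilon)\log_{1/p} n$ for $n$ sufficiently large.

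The main obstacle is case (ii): its ``friendly'' subcase where $x$ is the root $v$ of $T_1$ and $y$ is the root of $T_2$ admits the tight estimate $D_1 + D_2 + 2$, but the bound can degrade to $D_1 + 2 D_2 + 2$ when, under the arbitrary label correspondence $u'_i \leftrightarrow u''_i$, the descendant leaves $u'_i$ of $x$ all correspond to $u''_i$'s lying in a subtree of $T_2$ disjoint from the subtree of $y$, forcing a detour through the root of $T_2$. The extra $D_2 \leq \tfrac{5\epsilon}{2}\log_{1/p} n$ of slack in the $(1+6\epsilon)$ bound is precisely what is designed to accommodate this worst-case detour.
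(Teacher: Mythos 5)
Your proof is correct and follows essentially the same route as the paper: bound the heights of $T_1$ and $T_2$ (your $D_1,D_2$ are the paper's $\ell_1,\ell_2$, with the same $(1+\epsilon/2)\log_{1/p}n$ and $\tfrac52\epsilon\log_{1/p}n$ bounds), observe that any two vertices of $F\cup T_2$ are within $O(D_2+\beta)$ of each other via the $T_2$-shortcut, and then case-split on the locations of $x,y$ to get $\diam(\cG)\le D_1+2D_2+O(1/\epsilon)\le(1+6\epsilon)\log_{1/p}n$. Your balancing argument in case (i) is a slight sharpening of the paper's dichotomy (direct $T_1$-path vs.\ shortcut through leaves) but does not change the final estimate.
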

\begin{proof}
First note that the height of $T_2$ is 
\[ \ell_2 := h_n + (\beta -1) \lfloor \epsilon h_n\rfloor \leq \log_d n + 2\epsilon \log_{1/p} n \leq \tfrac52\epsilon \log_{1/p} n\,,\]
where we used the fact that $\log_d n < (\epsilon/2)\log_{1/p}n$ by~\eqref{eq:cor3-d-def}. Therefore 
\[ \dist_{\cG}(x,y) \leq  R := 2\ell_2 + \beta+2 =5\epsilon\log_{1/p}n+O(1) \quad\mbox{ for every  $x,y\in V(F)\cup V(T_2)$}\,,\]
 where the additive $\beta+2$ accounts for possibly traversing from the vertex $x$ to the closest vertex $w_i$ to it and then to $v_i$ (at total distance at most $\beta/2+1$ from $x$), and similarly for $y$.
 
Next, consider $x\in V(T_1)$ vs.\ $y\in V(F)\cup V(T_2)$. If $u'_i$ is some descendent of $x$ in the tree $T_1$, then as established above, $\dist(y,u'_i) \leq R$, while $\dist(x,u'_i)$ is at most the height of $T_1$, given by
\[ \ell_1 := h_n + (\alpha-1)\lfloor\epsilon h_n\rfloor \leq \log_d n + \log_{1/p} n \leq (1+\epsilon/2) \log_{1/p} n\,,\]
and overall
\[ \dist(x,y) \leq \ell_1 + R \quad\mbox{ for every  $x\in V(T_1)$ and $y\in V(F)\cup V(T_2)$}\,.\]
Finally, consider $x,y\in V(T_1)$, let $u'_i$ and $u'_j$ be some descendants of $x,y$, respectively, and write $d_x = \dist(x,u'_i)$ and $d_y = \dist(y,u'_j)$. If $d_x + d_y \leq \ell_1$, then  $\dist(x,y) \leq d_x+d_y + \dist(u'_i,u'_j)\leq \ell_1+ R$, and otherwise 
$ \dist(x,y)\leq \dist(v,x)+\dist(v,y) \leq (\ell_1 - d_x)+(\ell_2-d_y) < \ell_1$, thus overall,
\[ \dist(x,y) \leq \ell_1 + R \quad\mbox{ for every  $x\in V(T_1)$ and $y\in V(\cG)$}\,.\]
Plugging in the values of $\ell_1$ and $R$ we see that
$ \diam(\cG) \leq \ell_1+R \leq (1+6\epsilon)\log_{1/p}n$ 
for all sufficiently large $n$ (absorbing the additive $O(1)$-term in $R$ via the increased pre-factor of $\epsilon \log_{1/p}n$), as required.
\end{proof}

\begin{claim}\label{clm:cons3-no-giant}
W.h.p., the random graph $G\sim \cG_p$ satisfies $|\cC_1(G)| \leq n^{1-\epsilon/3}$.
\end{claim}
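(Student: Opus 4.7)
The plan is to exploit the severe subcriticality of percolation on the ``middle'' of $\cG$. Contract each subdivided path in $\cG$ to a single effective edge that is open iff all its constituent edges are open in $G$: each effective edge in the bottom $\lfloor \epsilon h_n\rfloor$ levels of $T_1$ then has survival probability $p^\alpha$, and each in the bottom of $T_2$ or in $F$ has survival probability $p^\beta$. By the choices of $\alpha$ and $\beta$ one has $p^\alpha \leq d^{-1/\epsilon}$ and $p^\beta \leq d^{-2}$, so the effective branching factors become $dp^\alpha \leq d^{1-1/\epsilon}$ in the contracted bottom of $T_1$ and $dp^\beta \leq 1/d$ in the contracted bottom of $T_2$ as well as in the contracted $F_0$ --- both much less than one. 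In contrast, the top $(1-\epsilon) h_n$ levels of each of $T_1$ and $T_2$ form a truncated $d$-ary tree with at most $\frac{d^{(1-\epsilon) h_n + 1}-1}{d-1} = O(n^{1-\epsilon})$ vertices.

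My next step is to show that, with probability $1-o(n^{-2})$ uniformly over the starting vertex, the component in $G$ restricted to the middle region --- the bottom of $T_1$ and $T_2$ together with all of $F$ --- has size at most a polylogarithmic function of $n$. This is a multi-type Galton--Watson argument with three types: hub vertices $w_i$, leaves $u'_i$ and $u''_i$, and internal bottom vertices. The only way an exploration starting at a hub can reach a new hub is via a contracted $F_0$-edge, since a leaf $u'_i$ only points back to its already-visited hub $w_i$; consequently, hub-to-hub branching is at most $dp^\beta \leq 1/d$, while the leaf- and bottom-side excursions are subcritical with branching factors $p^\alpha$ and $dp^\alpha$ respectively. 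Standard exponential tail bounds on subcritical Galton--Watson trees, combined with the (constant) subdivision multipliers $\alpha$ and $\beta$, yield the claimed polylog bound per starting vertex; a union bound over the $O(n)$ vertices of $\cG$ then controls the whole middle region.

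Finally, for any component $\cC$ of $G$, I would decompose $|\cC| = |\cC \cap \text{top of } T_1| + |\cC \cap \text{top of } T_2| + |\cC \cap \text{middle}|$. The first two summands are each $O(n^{1-\epsilon})$ by the size bound. For the third, every vertex of $\cC$ in the middle lies in a ``middle excursion'' originating from some vertex of $\cC$ in a top region; since each such excursion is polylogarithmic in $n$ with the stated probability and there are at most $O(n^{1-\epsilon})$ top-region vertices in $\cC$, one obtains $|\cC \cap \text{middle}| = O(n^{1-\epsilon} \cdot \mathrm{polylog}(n))$. Combining, $|\cC| = O(n^{1-\epsilon} \cdot \mathrm{polylog}(n)) \leq n^{1-\epsilon/3}$ for all $n$ sufficiently large.

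The main technical obstacle is the multi-type branching analysis in the middle: although each of the three sub-regions is severely subcritical in isolation, one must verify that the cross-region edges $(u'_i, w_i)$ and $(u''_i, w_i)$ do not cumulatively propagate the component in a supercritical fashion. The calibration $\alpha \sim \epsilon^{-1}\log_{1/p} d$ and $\beta \sim 2\log_{1/p} d$ is specifically designed so that the dominant eigenvalue of the multi-type mean matrix stays bounded by a constant strictly less than $1$; indeed, the hub-to-hub branching factor $1/d$ governs the spectrum, since each leaf excursion returns at most $p \cdot p^\alpha$ expected weight back to a hub per hub entry, which is negligible compared to $1/d$.
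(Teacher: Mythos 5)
Your overall plan (isolate the small ``top'' of $T_1,T_2$, which has only $O(n^{1-\epsilon})$ vertices, and argue the remaining ``middle'' region is so heavily subdivided that percolation there is severely subcritical) is exactly the idea the paper uses. The execution differs: the paper defines $U_1,U_2$ as the vertices within distance roughly $(1-\epsilon/2)h_n$ of the two roots, then observes that for $x\in U_3 := V\setminus(U_1\cup U_2)$ the BFS exploration (with subdivided paths contracted) to pre-subdivision depth $\ell\approx(\epsilon/2)h_n$ is stochastically dominated by a \emph{single-type} GW tree with offspring $\Bin(d,p^\beta)$; it then bounds $\E|U'_3|$ where $U'_3=\{x\in U_3:|\cC_x(G)|>\ell\}$, getting $\E|\cC_1(G)|=O(n^{1-2\epsilon/5})$ and finishing with Markov. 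You instead go for a per-vertex $1-o(n^{-2})$ tail bound plus a union bound, which can also be made to work, but is a heavier hammer.

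The real gap, however, is in your multi-type branching-process setup. As you write the mean matrix, the hub--leaf block has both off-diagonal entries of order $p$: a hub reaches each of $u'_i,u''_i$ with probability $p$, and a leaf reaches its hub with probability $p$. The $2\times2$ block $\bigl(\begin{smallmatrix}0 & 2p\\ p & 0\end{smallmatrix}\bigr)$ has spectral radius $\sqrt{2}\,p$, which exceeds $1$ whenever $p>1/\sqrt 2$ --- and Theorem~\ref{mainthm:q-benjamini} must cover all $p\in(0,1)$. So the naive multi-type GW you describe is actually \emph{supercritical} for large $p$, and the statement that ``the hub-to-hub branching factor $1/d$ governs the spectrum'' is not something you can read off the mean matrix as written. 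You do correctly identify the physical reason this spurious criticality is not real: a leaf $u'_i$ is adjacent to exactly one hub, namely the $w_i$ one just came from, so the transition $w\to u\to w$ never produces a new hub. But this ``cannot backtrack'' constraint is a history dependence that the plain multi-type GW does not encode; to make the argument rigorous you would either have to split the leaf type into ``leaf entered from its hub'' vs.\ ``leaf entered from its tree parent'' (and redo the spectral calculation), or --- more cleanly --- contract each triple $\{u'_i,w_i,u''_i\}$ into a single super-vertex of degree $d+2$ whose incident edges are \emph{all} subdivided paths of length $\ge\beta$. That contraction collapses the whole middle exploration to a single-type GW dominated by $\Bin(d+2,p^\beta)$ with $(d+2)p^\beta<1$, which is essentially what the paper does (the paper writes $\Bin(d,p^\beta)$, silently absorbing the $+2$). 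Your side claim that ``each leaf excursion returns at most $p\cdot p^\alpha$ expected weight back to a hub'' is also not the right expression --- the excursion has to go up and come back down, giving something like $(d-1)p^{2\alpha+2}$ --- though this does not change the conclusion that the quantity is negligible.

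Two smaller points: (i) the per-vertex tail bound $o(n^{-2})$ you invoke requires an explicit exponential-tail estimate for the total progeny of a subcritical GW with bounded offspring (e.g.\ $\P(\Lambda>k)\le(e\mu)^k$ as in the paper), which you assert but do not prove; (ii) your decomposition step tacitly assumes $\cC$ meets a top region --- components entirely in the middle need the per-vertex bound directly, which is fine but worth stating.
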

\begin{proof}
Let $U_1$ denote the vertices $x\in V(\cG)$ which are at distance at most $(1-\epsilon/2 + \alpha\epsilon/2)h_n$ from $v$, the root of $T_1$ (these are the vertices of the subtree of $T_1$ in the first $(1-\epsilon/2)h_n$ levels of the tree before the subdivision, as well as the new vertices in the subdivided edges between them). Similarly, let $U_2$ denote the vertices $x\in V(\cG)$ whose distance from the root of $T_2$ is at most $(1-\epsilon/2 + \beta\epsilon/2)h_n$, and let $U_3=V(\cG)\setminus(U_1\cup U_2)$. 
By construction, $ |U_i| = O(d^{(1-\epsilon/2)h_n}) = O(n^{1-\epsilon/2})$ for $i=1,2$.

At the same time, if $x\in U_3$ then exploring the neighborhood of $x$ in $\cG$ via Breadth-First-Search reveals edges that were subdivided (in either $F$, $T_1$ or $T_2$) into paths of length at least $\beta$, to within depth (prior to the subdivision) of at least $\ell := \lceil (\epsilon/2) h_n\rceil $. In particular, this exploration process in $G\sim\cG_p$ is stochastically dominated by a Galton--Watson tree with offspring variable $\Bin(d,p^\beta)$. Thus, if $\zeta_i$ are i.i.d.\ such random variables, and $\Lambda$ is the total progeny of this Galton--Watson tree, then
 \begin{align*}
 \P(\Lambda > k) & \leq \P\left(\sum_{i=1}^k(\zeta_i-1)\geq 0\right) \leq \P\left(\Bin(k d,p^\beta)\geq k\right)\leq \binom{kd}{k}p^{\beta k} \leq \left(ed p^\beta\right)^k\,,
 \end{align*}
 which, for $k=\ell$ and using $p^\beta<d^{-2}$, is at most $(d/e)^{-\ell} = O\big(n^{-\frac25\epsilon}\big)$ provided that $\epsilon$ is sufficiently small.
So, if 
$ U'_3 = \{ x\in U_3 \,:\; |\cC_x(G)| > \ell\}$, then $\E |U'_3| = O(n^{1-\frac25\epsilon})$.
As $|\cC_1(G)| \leq \max\{ \ell, |U_1|+|U_2|+|U'_3|\}$, we find that 
$ \E |\cC_1(G)|  = O(n^{1-\frac25\epsilon})$,
which implies the statement of the claim.
\end{proof}
\begin{claim}\label{clm:cons3-diam(C)}
For large enough $n$, the random graph $G\sim \cG_p$ satisfies
\[ \P\left(\diam(\cC_v(G)) > (1-\epsilon) \log_{1/p}n\right) \geq 1-\epsilon\,.\]
\end{claim}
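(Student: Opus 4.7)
The plan is to exhibit a descendant $y$ of $v$ in $T_1$ that lies in the percolation cluster $\cC_v(G)$ at depth at least $D:=(1-\epsilon)\log_{1/p} n$ from $v$ in $\cG$. Since $T_1$ is attached to the rest of $\cG$ only through its leaves $\{u'_i\}$, any $v$-to-$y$ path in $G$ either stays inside $T_1$ (and has length exactly $d(y)$), or else descends to some leaf $u'_k$ (length at least $\ell_1$), steps outside, and re-enters at some leaf $u'_j$ to climb to $y$ (additional length at least $\ell_1-d(y)+1$), giving a total length at least $2\ell_1-d(y)+1>d(y)$. Hence $\dist_G(v,y)=d(y)\geq D$, reducing the claim to producing such a $y$ with probability at least $1-\epsilon$.

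The cluster of $v$ inside $T_1$ is a two-phase Galton--Watson process on the underlying $d$-ary skeleton: the top $h_n-\lfloor\epsilon h_n\rfloor$ tree-levels are unsubdivided, with offspring $\Bin(d,p)$ of mean $dp\geq(1/p)^{2/\epsilon-1}$, very large by the choice of $d$; the lower $\lfloor\epsilon h_n\rfloor$ tree-levels are subdivided into paths of length $\alpha$, giving offspring $\Bin(d,p^\alpha)$ of mean $dp^\alpha\leq d^{1-1/\epsilon}\ll 1$, strongly subcritical by our choice $\alpha\geq\epsilon^{-1}\log_{1/p}d$. For the supercritical Phase 1, the extinction probability $q_d$ tends to $0$ as $d\to\infty$, so for $\epsilon$ small enough $q_d<\epsilon/4$; combined with Kesten--Stigum (whose hypothesis holds because the offspring is bounded), the count $Y_{j_1}$ of survivors at tree-level $j_1:=\lceil(1-\epsilon)h_n\rceil$ is at least $K:=c(dp)^{(1-\epsilon)h_n}\geq c\,n^{1-3\epsilon/2}$ with probability at least $1-\epsilon/2$, using $\log_d(1/p)\leq\epsilon/2$.

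For the subcritical Phase 2, condition on the Phase-1 outcome and consider the $K$ independent $\Bin(d,p^\alpha)$-GW subtrees rooted at the Phase-1 survivors. The target tree-level $j^*\approx(1-\epsilon^2)h_n$ is $L:=\lceil(1-\epsilon)\epsilon h_n\rceil$ further levels deep, translating to an extra subdivided depth of $\alpha L\approx(1-\epsilon)\log_{1/p}n$ as required. A first-moment count gives $\E[X_u]=(dp^\alpha)^L\sim n^{-(1-\epsilon)^2}$ for the number $X_u$ of level-$j^*$ descendants of a single Phase-1 root $u$ in the cluster. A second-moment calculation, organizing pairs by their lowest common ancestor, yields the off-diagonal contribution $(\E[X_u])^2\sum_{i=0}^{L-1}(dp^\alpha)^{-i}$, which since $dp^\alpha\ll 1$ evaluates to $\sim(dp^\alpha)\E[X_u]\ll\E[X_u]$, giving $\E[X_u^2]=O(\E[X_u])$. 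Paley--Zygmund then gives $\pi:=\P(X_u\geq 1)\geq c\,\E[X_u]\geq c\,n^{-(1-\epsilon)^2}$, and the independence of the $K$ subtrees gives $\P(\text{no subtree reaches level }j^*)\leq(1-\pi)^K\leq\exp(-c\,n^{\epsilon/2-\epsilon^2})=o(1)$. Combining the two phases yields $\P(Y_{j^*}\geq 1)\geq 1-\epsilon$ for large $n$.

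The main obstacle I expect is the Phase-2 second-moment bound. The per-subtree survival probability $\pi$ shrinks polynomially in $n$, so the independence argument across $K$ subtrees succeeds only if the matching tight lower bound $\pi\gtrsim\E[X_u]$ holds, which in turn requires $\E[X_u^2]$ to stay of order $\E[X_u]$. This reduction hinges on the geometric series $\sum_i(dp^\alpha)^{-i}$ being concentrated in its last term, which is guaranteed precisely by the parameter choice making $dp^\alpha\ll 1$.
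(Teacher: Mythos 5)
Your proof is correct and follows the same overall two-phase strategy as the paper: analyze the cluster of $v$ in $T_1$ as a supercritical $\Bin(d,p)$-GW process through the unsubdivided levels (controlled via Kesten--Stigum and the smallness of the extinction probability for large $d$), and then, conditionally on a large frontier, show that with probability $1-o(1)$ at least one of the independently evolving subtrees survives the strongly subcritical subdivided levels down to the target depth $\approx(1-\epsilon)\log_{1/p}n$. Your preliminary geometric observation that $\dist_\cG(v,y)$ for $y\in T_1$ equals the $T_1$-tree-distance (since any detour through the leaves costs at least $2\ell_1-d(y)+1>d(y)$) is correct and worth making explicit; the paper takes this for granted. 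The one genuine methodological divergence is in Phase~2: the paper simply lower-bounds the per-subtree survival probability by $p^{\alpha\ell}$ (survival of a single straight path) and chooses $\ell\approx(\epsilon-2\epsilon^2)h_n$ so that this first-moment bound, multiplied by the Phase-1 frontier size $\gtrsim n^{1-\frac32\epsilon}$, still diverges. You instead carry out a second-moment/Paley--Zygmund calculation organized by lowest common ancestors, showing $\E[X_u^2]=O(\E[X_u])$ because $dp^\alpha\ll1$ makes the geometric sum degenerate; this yields the sharper bound $\pi\gtrsim\E[X_u]=(dp^\alpha)^L$, which allows a larger $L\approx(1-\epsilon)\epsilon h_n$ and hence the target depth with coefficient $(1-\epsilon)$ rather than $(1-2\epsilon)$. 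The paper's single-path bound is simpler and already suffices (the claim only requires the depth up to a multiplicative factor $1-O(\epsilon)$, and the parameters can be reindexed), while your version buys a tighter constant at the cost of the second-moment computation; both are valid.
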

\begin{proof}
Let $A$ denote the vertices of $T_1$ at distance $h_n^* = h_n-\lfloor\epsilon h_n\rfloor$ from its root $v$, and let $B$ be the set of vertices whose distance from $v$, prior to the subdivision of the edges of the tree $T_1$, was $h_n^* + \ell$, where $\ell = \lfloor(\epsilon-2\epsilon^2) h_n\rfloor$.
The probability that percolation on the subtree of $T_1$ rooted at some $x\in A$ survives to intersect $B$ is at least
\[  p^{\alpha\ell} \leq d^{-\epsilon^{-1} (\epsilon -2\epsilon^2) h_n} = n^{-1+2\epsilon}\,.\] 

Recall that, if $Z_t$ is the size of generation $t$ in a Galton--Watson tree with offspring variable $\Bin(d,p)$, such that $m=dp>1$ and its extinction probability is $0<q<1$, then $Z_t m^{-t}$ converges a.s.\ as $t\to\infty$ to a random variable $W$ which, except for a mass of $1-q$ at $W=0$, has an absolutely continuous distribution on $(0,\infty)$ (see, e.g.,~\cite[{\S}I.12]{AthreyaNey04}). In particular, there exists $\delta>0$ such that $\P(Z_t \geq \delta m^t) > 1-q-\epsilon/3$ for every sufficiently large $t$. Further, since $q$ is monotone decreasing in $d$, our choice of $d$ in~\eqref{eq:cor3-d-def} readily implies (recall~\eqref{eq-q-def}) that $q<\epsilon/3$ provided $\epsilon$ is small enough, thus overall  $\P(Z_t \geq \delta m^t) > 1-\frac23\epsilon$ for every large enough $t$. Specialized to our setting, we take $t=h_n^*$, and noting that
\[ m^{h_n^*} \geq d^{-1} n^{1-\epsilon} p^{(1-\epsilon)\log_d n} \geq d^{-1} n^{1-\frac32 \epsilon + \frac12\epsilon^2}\]
(using that $\log (1/p) < \frac12\epsilon\log d$ by~\eqref{eq:cor3-d-def}), we infer that the following bound on the size of $A'$, the set of all vertices $x\in A$ such that there exists a path from $v$ to $x$ in $G$: for every sufficiently large $n$,
\[ \P\left(|A'| \geq n^{1-\frac32\epsilon} \right) \geq 1-\tfrac23\epsilon\,.\]
On the event $|A'|\geq n^{1-\frac32\epsilon}$,  the size of the set $B'$ of vertices $y\in B$ that are connected to $v$ in $G$ stochastically dominates a random variable $Z\sim \Bin(n^{1-\frac32\epsilon}, n^{-1+2\epsilon})$ by our bound on the event that the subtree of $x\in A$ survives the percolation to intersect $B$. Since $Z>0$ w.h.p.\ (being concentrated around $n^{\epsilon/2}$), it then follows that $\P(B'\neq \emptyset) \geq 1-\tfrac23\epsilon-o(1)$, whereas every vertex $y\in B'\cap\cC_v(G)$ would imply that 
\[  \diam(\cC_v(G))\geq \dist(v,y) \geq h_n^* + \alpha \ell > (1-2\epsilon)\log_{1/p}n\,.\qedhere \]
\end{proof}
As argued in the proof of Theorem~\ref{mainthm:second-comp}, the graph obtained by connecting the expander $F$ to $T_i$ is clearly also an expander, hence (iterating this) so is the entire graph $\cG$.
The combination of Claims~\ref{clm:cons3-diam(G)},~\ref{clm:cons3-no-giant} and~\ref{clm:cons3-diam(C)} thus concludes the proof of Theorem~\ref{mainthm:q-benjamini}. \qed

\section{Separators, long paths and complete minors}\label{sec:separators_long_paths}

In this section we research further typical properties of a random subgraph of a high girth constant degree expander. The properties are: non-existence of a small separator, existence of a linearly long path, and existence of a large complete minor. The results obtained are ``soft", so to say, and lack the precision of the conclusions of Theorem~\ref{mainthm:giant-2core-sizes}; still, we believe they are of interest and complement nicely the more accurate results.
We start with arguing that a random subgraph of a high girth constant degree expander typically has all its separators linear in $n$. Given a graph $G =(V,E)$ on $n$ vertices, a vertex set $S\subset V$ is called a {\em separator} if there is a partition $V = A\cup B\cup S$ of the vertex set of $G$ such that $G$ has no edges between $A$ and $B$, and
$|A|, |B|\le 2n/3$. Separators serve to measure quantitatively the connectivity of large vertex sets in
graphs; the fact that all separators in $G$ are large indicates that it is costly to break $G$ into large
pieces not connected by any edge. 

\begin{theorem}\label{thm-separators}
Fix $d\geq 3$ and $\frac{1}{d-1}<p<1$.
For every $b>0$ there exist some $\delta,c_1,R>0$ such that, if $\cG$ is a regular $(b,d)$-expander on $n$ vertices with girth exceeding $R$, then
with probability $1-O(e^{-c_1n})$, the largest connected component $\GC$ of the random graph $G\sim \cG_p$ has no separator of size at most $\delta n$.
\end{theorem}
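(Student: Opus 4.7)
\medskip
\noindent \textit{Proof plan.}
The plan is to derive a contradiction: if a small separator $S$ existed, then the two sides $A$ and $B$ would both be linear in size and would induce a ``bad'' partition of the $G_1$-giant that sprinkling cannot repair. I would use the two-round exposure $G = G_1 \cup F$ from Section~\ref{sec:asymp}, with $G_1 \sim \cG_{p'}$, $p':=p-\epsilon$, and $F \sim \cG_{\epsilon'}$ (where $\epsilon':=\epsilon/(1-p')$ and $\epsilon$ is small). By Theorem~\ref{mainthm:giant-2core-sizes} (applied to both $G$ and $G_1$) together with continuity of $\theta_1(\cdot)$, we have $|V(\GC(G))| \ge (\theta_1-\epsilon_0)n$ and $|V(\GC(G)) \setminus V(\cC_1(G_1))| \le (\theta_1(p)-\theta_1(p')+2\epsilon_0)n$ w.h.p., and the latter is below $\alpha n/2$ for $\epsilon$ small, where $\alpha := (\theta_1-\epsilon_0-\delta)/3$ comes from the size constraint $|A|,|B|\le \tfrac{2}{3}|V(\GC(G))|$. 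Hence if $\cC_1(G_1)\setminus S$ were contained in one side, the other side would have size below $\alpha n$, a contradiction; so $S$ nontrivially splits $\cC_1(G_1)$.

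Decompose $\cC_1(G_1)[V\setminus S]$ into its connected components $\hat C_1, \hat C_2, \ldots$, each entirely in $A$ or in $B$ by the separator property. By edge-counting (each $\hat C_i$ has at least one $G_1$-edge into $S\cap \cC_1(G_1)$), their number is at most $d|S|\le d\delta n$. Small pieces (size $<R$) contribute at most $Rd\delta n$ vertices, which is below $\alpha n/4$ once $\delta\le \alpha/(4Rd)$; writing $\hat{\mathcal{S}} := \{\hat C_i : |\hat C_i|\ge R\}$ partitioned into $\hat{\mathcal{A}},\hat{\mathcal{B}}$ by side, one gets $|\bigcup\hat{\mathcal{A}}|,|\bigcup\hat{\mathcal{B}}|\ge \alpha n/2$. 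The heart of the argument is a Menger-plus-sprinkling variant of Claim~\ref{clm:menger} inside $\cG[V\setminus S]$, which retains partial expansion: for every $T\subset V\setminus S$ with both $|T|, |(V\setminus S)\setminus T|\ge \alpha n/2$,
\[
|E_{\cG[V\setminus S]}(T,(V\setminus S)\setminus T)| \;\ge\; \tfrac{b\alpha}{2}n - d|S| \;\ge\; \tfrac{b\alpha}{4}n
\]
provided $\delta \le b\alpha/(4d)$. Menger's theorem (with averaging of path lengths) yields at least $b\alpha n/8$ edge-disjoint paths of length $\le L := 4d/(b\alpha)$ in $\cG[V\setminus S]$ between $\bigcup\hat{\mathcal{A}}$ and $\bigcup\hat{\mathcal{B}}$, so the probability that $F$ misses all of them is at most $\exp\!\left(-(\epsilon')^L b\alpha n/8\right)$. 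Crucially $|\hat{\mathcal{S}}|\le d\delta n$, bounding the number of pairs of subfamilies by $4^{d\delta n}$; coupled with $\binom{n}{\le\delta n}\le 2^{H(\delta)n}$ choices of $S$, all union-bound factors amount to $e^{o(n)}$ for $\delta$ sufficiently small, and are dominated by the per-event $\exp(-\gamma n)$ bound.

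Such an $F$-path connects $A$ to $B$ inside $V\setminus S$. Its internal vertices cannot lie in $W := V(\cG)\setminus V(\GC(G))$, since any $F$-edge (hence $G$-edge) from $V(\GC(G))$ to $W$ would place its $W$-endpoint inside $V(\GC(G))$, a contradiction. The entire path therefore lies in $A\cup B$ and must use a direct $F$-edge between $A$ and $B$, violating the separator condition. The main obstacle I anticipate is controlling the interplay between the $G_1$-determined family $\hat{\mathcal{S}}$ (whose size bound $d\delta n$ is precisely what keeps the union bound tractable) and the sprinkling $F$ (independent of $G_1$); this is cleanly handled by conditioning on $G_1$, applying the Menger--sprinkling bound uniformly over $S$ and over pairs of subfamilies of $\hat{\mathcal{S}}$, and taking the final union bound over $S$.
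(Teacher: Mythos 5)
Your plan is essentially correct and would yield the theorem, but it takes a genuinely different---and considerably heavier---route than the paper's proof, which uses a short indirect ``tilting'' argument that bypasses the Menger-plus-sprinkling machinery entirely. The paper observes that $G'\sim\cG_{p'}$ can be realised from $G\sim\cG_p$ by retaining each edge independently with probability $\rho:=p'/p$. If $G$ had a vertex set $S$ of size $\le\delta n$ with $|\cC_1(G\setminus S)|\le\tfrac{11}{15}\theta_1 n$, then with probability at least $(1-\rho)^{d\delta n}$ \emph{all} edges touching $S$ would be erased in passing from $G$ to $G'$, forcing $|\cC_1(G')|\le\tfrac{11}{15}\theta_1 n$. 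Choosing $p'$ so that $\theta_1(p')=\tfrac56\theta_1(p)$ makes the latter event exponentially unlikely (Theorem~\ref{mainthm:giant-2core-sizes} applied at $p'$), hence so is the separator-type event once $\delta$ is small; one more application of Theorem~\ref{mainthm:giant-2core-sizes} at $p$, together with the definition of a separator, finishes the proof. No sprinkling round, no Menger, and no union bound over $S$ is needed at all.

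Your direct route is in the spirit of Claim~\ref{clm:menger}, adapted to $\cG[V\setminus S]$ and to the $G_1$-determined family $\hat{\mathcal{S}}$. The crucial bookkeeping step---that $|\hat{\mathcal{S}}|\le d\delta n$ because each piece of $\cC_1(G_1)\setminus S$ has a $G_1$-edge into $S$, so the union bound over $S$ and over two-colourings of $\hat{\mathcal{S}}$ costs only $2^{H(\delta)n}\cdot 4^{d\delta n}$---is exactly what makes the direct approach feasible, and you identify it correctly. Two small points deserve care: (i) the containment $V(\cC_1(G_1))\subseteq V(\GC(G))$, which you use implicitly to place each $\hat C_i$ in $A$ or $B$, needs an explicit line (it follows from the upper bound on $|V_1(G)|$ and the lower bound on $|V(\GC(G))|$, both from \S\ref{sec:giant}, which force $|\cC_2(G)|$ to be much smaller than $|\cC_1(G_1)|$); and (ii) to avoid circularity, fix a $\delta$-independent lower bound $\alpha_0$ on $\alpha$ (say $\theta_1/4$) \emph{before} choosing $\delta$, so that the path length $L$ and the per-event exponent $\gamma$ do not depend on $\delta$. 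With these fixes your argument is sound; it simply trades the paper's brevity and modularity (Theorem~\ref{mainthm:giant-2core-sizes} used as a black box) for a re-run of the sprinkling machinery restricted to $V\setminus S$.
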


\begin{proof}
We first describe the idea of the proof. We will argue that if $G\sim \cG_p$ has a small separator $S$, then by deleting all edges touching $S$ we get a graph $G'\sim \cG_{p'}$ without a connected component of size as large as dictated by Theorem~\ref{mainthm:giant-2core-sizes}, for some $\tfrac1{d-1}<p'<p$. Since deleting a relatively small number of edges touching $S$ incurs a relatively small penalty when going from $G$ to $G'$, we will thus conclude that the probability of $G$ to have a small separator must be exponentially small to begin with.

Now we provide a full proof, implementing rigorously the above described outline. 
For $\tfrac1{d-1}<p'<p$, to be chosen momentarily, denote
\[
\theta_1:=\theta_1(p)\,,\quad \theta_1':=\theta_1(p')\,,
\]
where we apply~\eqref{eq-theta-def} to define $\theta_1,\theta_1'$. Choose $p'$ so $\theta_1'=\frac{5}{6}\theta_1$ (this is possible by the continuity of $\theta_1(p)$),
and let
\[
\rho=\frac{p'}{p}\,.
\]
Notice that a random graph $G'\sim \cG_{p'}$ can be obtained first by drawing a random graph $G\sim \cG_p$, and then by retaining every edge of $G$ with probability $\rho$ independently.

Let $\cA$ be the following event addressing $G\sim \cG_p$ for some (small enough) $\delta>0$ to be set later:
\[ \cA=\big\{\mbox{ $\exists\,S\subset [n]$\,:\; $|S|\le \delta n$ and $|\cC_1(G\setminus S)|\leq \frac{11}{15}\theta_1n$ }\big\}\,,\]
and let $\cB$ be the following event addressing $G'\sim \cG_{p'}$:
\[ \cB=\big\{ |V(\GC(G'))|\le \tfrac{11}{15}\theta_1n\big\}\,.\]
 Suppose $G$ satisfies $\cA$, and choose $S$ as in the definition of $\cA$. There are at most $d|S|\le \delta dn$ edges touching $S$ in $G$. The probability to erase all these edges when going from $G$ to $G'$ is at most $(1-\rho)^{\delta dn}$. However, if none of these edges belongs in $G'$, then $|\cC_1(G')|\leq \frac{11}{15}\theta_1n=\frac{22}{25}\theta_1'n$.  It thus follows that 
 \[ \P(\cB)\ge \P(\cA)\, (1-\rho)^{\delta dn}\,.\]
 On the other hand, by Theorem~\ref{mainthm:giant-2core-sizes}, the order of $\cC_1(G')$ is very close to $\theta_1'n$ with probability exponentially close to 1, making the event $\cB$ exponentially unlikely. Specifically, applying Theorem~\ref{mainthm:giant-2core-sizes} in $\cG_{p'}$
 with $\epsilon=\frac{3}{25}\theta_1'$, we have $\P(\cB)\le Ce^{-cn}$ for $C,c>0$. Hence,
$$
\P (\cA)\le Ce^{-cn}\,(1-\rho)^{-\delta dn}\le e^{-c'n}\,,
$$
for $\delta>0$ small enough (as a function of other parameters).

Finally, observe that if $G\sim \cG_p$ does not satisfy $\cA$, then for every subset $S$ of at most $\delta n$ vertices, the  graph $G-S$ has some connected component of size more than $\frac{11}{15}\theta_1n$. Invoking Theorem~\ref{mainthm:giant-2core-sizes} once again, this time in $\cG_{p}$ with $\epsilon=\frac{1}{10}\theta_1$, we derive that with probability exponentially close to 1, the random graph $G\sim \cG_p$ does not satisfy $\cA$, and its largest connected component satisfies $|\GC(G)| \leq \frac{11}{10}\theta_1n$. Therefore, in this case, recalling the definition of a separator, we derive that no subset $S$ of at most $\delta n$ vertices can be a separator in $\GC(G)$.
\end{proof}

As we have indicated, non-existence of small separators is a key fact in deriving other typical properties of (the giant component) of a percolated high girth expander in the super-critical regime. They are given in the following two corollaries.

\begin{corollary}\label{cor-long_path}
Fix $d\geq 3$ and $\frac{1}{d-1}<p<1$.
For every $b>0$ there exist some $\delta,c,R>0$ such that, if $\cG$ is a regular $(b,d)$-expander on $n$ vertices with girth exceeding $R$, then
with probability at least $1-\exp(-cn)$, the random graph $G\sim \cG_p$ contains a path of length at least $\delta n$.
\end{corollary}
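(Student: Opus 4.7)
The plan is to combine the separator bound of Theorem~\ref{thm-separators} with a standard depth-first search (DFS) argument (the same device used to derive long paths from the absence of small separators in the works of Krivelevich and Sudakov) in order to pass from ``no small separator'' to ``long path.'' Let $\delta_1, c_1, R_1 > 0$ be the constants supplied by Theorem~\ref{thm-separators} for the given $b$, $d$, and $p$, and set $\delta = \delta_1/2$, $c = c_1$, and $R = R_1$.

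Run DFS on the largest component $\GC(G)$, starting from an arbitrary vertex. Throughout the execution, maintain the tripartition $V(\GC) = A \cup B \cup C$, where $A$ is the set of yet-unvisited vertices, $B$ is the set of vertices currently on the DFS stack, and $C$ is the set of vertices whose DFS call has already returned. Two standard properties of DFS are crucial: \emph{(i)} the vertices of $B$, listed in stack order, induce a simple path in $\GC$ (the current DFS path); \emph{(ii)} $\GC$ contains no edges between $A$ and $C$, because whenever a vertex enters $C$ all of its neighbors have already been pushed onto the stack and therefore lie in $B \cup C$ from that moment on.

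Now consider the first time during the DFS at which $|C| \geq \lceil |V(\GC)|/3 \rceil$; since $|C|$ grows by $1$ at each such step, at this moment $|C| = \lceil |V(\GC)|/3 \rceil$, and consequently also $|A| = |V(\GC)| - |B| - |C| \leq 2|V(\GC)|/3$. By property \emph{(ii)}, the set $B$ is a valid separator of $\GC$ in the sense used in Theorem~\ref{thm-separators}. Invoking that theorem, with probability at least $1 - O(e^{-c_1 n})$ we have $|B| > \delta_1 n$, and by property \emph{(i)} the DFS path supported on $B$ is then a simple path in $G$ with at least $\delta_1 n - 1 \geq \delta n$ edges. The main conceptual content is already carried by Theorem~\ref{thm-separators}; the DFS-to-path reduction is routine, so I do not anticipate a serious obstacle. (The mild technical point that the argument is nonvacuous, i.e.\ that $|V(\GC)|$ is at least linear in $n$, is given by Theorem~\ref{mainthm:giant-2core-sizes} with exponentially small failure probability.)
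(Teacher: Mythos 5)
Your proof is correct and follows essentially the same route as the paper: both hinge on Theorem~\ref{thm-separators} (no separator of size $\leq\delta n$ in $\GC$) combined with Theorem~\ref{mainthm:giant-2core-sizes} (to make $|V(\GC)|$ linear and the argument nonvacuous), and both convert the expansion of large sets into a long path. The only cosmetic difference is that the paper black-boxes the DFS step by invoking Lemma~\ref{lemma-long_path} from \cite{Krivelev16}, whereas you unfold that lemma's standard DFS proof explicitly (the stack $B$ induces a path, there are no $A$--$C$ edges, so at the moment $|C|=\lceil|V(\GC)|/3\rceil$ the stack is a separator and hence large).
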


\begin{proof} We need the following result (see, e.g.~\cite[Proposition~2.1]{Krivelev16}, including a simple proof):

\begin{lemma}\label{lemma-long_path}
Let $k,l$ be positive integers. Assume $G=(V,E)$ is a graph on more than $k$
vertices, where every $A\subset V$ of size $|A|=k$ has at least $l$ neighbors in $V\setminus A$. Then $G$ contains a
path of length~$l$.
\end{lemma}

According to Theorems~\ref{mainthm:giant-2core-sizes} and~\ref{thm-separators}, with probability exponentially close to 1, the giant component  $\GC$ of $G\sim \cG_p$ is of size close to $\theta_1n$ and has no separator of size at most $\delta n$; we can assume $\delta\ll \theta_1$. Let $A$ be a subset of $\GC$ of cardinality $|A|=|V(\GC)|/3$. Then by the definition of a separator, $A$ has at least $\delta n$ neighbors outside. It thus follows by Lemma~\ref{lemma-long_path} that $\GC$ contains a path of length at least $\delta n$.
\end{proof}

As it frequently happens in random graphs, getting a linearly long cycle from a linearly long path is pretty easy, here is a brief sketch of the argument. Choose $\frac{1}{d-1}<p'<p<1$ and argue that a random graph $G\sim \cG_{p'}$ has w.h.p. a path $P$ of length $\delta'n$, for some constant $\delta'>0$. Let $P_1,P_2$ be the first and the last thirds of $P$, respectively. Then by applying Menger's Theorem to the base graph $\cG$, we derive that it contains linearly many constant length paths between $V(P_1)$ and $V(P_2)$. With high probability at least one of these short paths survives sprinkling (taking us from $\cG_{p'}$ to  $\cG_p$); its union with $P$ contains a linearly long cycle.

We now discuss embedding complete minors in percolated expanders. Let $G=(V,E)$, and let $t>0$ be an integer.
We say that $G$ contains a {\em minor of the complete graph} $K_t$ if there is a
collection $(V_1,\ldots, V_t)$ of pairwise disjoint vertex subsets in $V$ such that each $V_i$ spans a connected
subgraph in $G$, and in addition $G$ has an edge between every pair of subsets $V_i,V_j$. Observe trivially that if $G$ contains a minor of $K_t$,  then $|E(G)|\ge \binom{t}{2}$; this trivial bound provides an obvious but meaningful benchmark for minor embedding statements.

Kawarabayshi and Reed proved in~\cite{KR10} that a graph $G$ on $n$ vertices has a minor of the complete graph $K_h$ or a separator of order $O(h\sqrt{n})$. Since by Theorem~\ref{thm-separators} the random graph $G\sim \cG_{p}$ has all separators of size at least $\delta n$ in its giant component $\GC$  (whose size is much larger than $\delta n$ by Theorem~\ref{mainthm:giant-2core-sizes}) with probability exponentially close to 1, we conclude:

\begin{corollary}\label{cor-complete_minor}
Fix $d\geq 3$ and $\frac{1}{d-1}<p<1$.
For every $b>0$ there exist some $\delta,c,R>0$ such that, if $\cG$ is a regular $(b,d)$-expander on $n$ vertices with girth exceeding $R$, then with probability $1-O(e^{-cn})$, the random graph $G\sim \cG_p$ contains a minor of $K_{\delta\sqrt{n}}$.
\end{corollary}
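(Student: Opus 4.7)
The plan is to deduce the existence of a large complete minor directly from the absence of small separators, via the Kawarabayashi--Reed theorem~\cite{KR10}: there is an absolute constant $K_0 > 0$ such that every graph $H$ on $N$ vertices either contains a $K_h$ minor, or admits a separator of size at most $K_0 h \sqrt{N}$.

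The proof I have in mind proceeds in three short steps. First, apply Theorem~\ref{mainthm:giant-2core-sizes} (with, say, $\epsilon = \theta_1/2$) to bound $|V(\GC)| \le \tfrac{3}{2}\theta_1 n$ with probability $1 - O(e^{-cn})$. Second, invoke Theorem~\ref{thm-separators} to produce a constant $\delta_0 = \delta_0(b,d,p) > 0$ such that, with probability $1 - O(e^{-cn})$, no subset of at most $\delta_0 n$ vertices is a separator of $\GC$. Third, apply Kawarabayashi--Reed to $\GC$ itself: if $\GC$ failed to contain a $K_h$ minor, it would admit a separator of size at most $K_0 h \sqrt{|V(\GC)|}$, which for $h = \lfloor \delta \sqrt{n} \rfloor$ with $\delta$ chosen proportional to $\delta_0 / K_0$ (specifically, $\delta \le \delta_0 / (K_0 \sqrt{3\theta_1/2})$) is strictly less than $\delta_0 n$, contradicting the second step. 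Since $\GC \subseteq G$, any $K_h$ minor of $\GC$ is a $K_h$ minor of $G$, and a union bound over the two exponentially small failure probabilities finishes the argument.

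Beyond the bookkeeping of constants, there is essentially no obstacle here: the substantive work has already been done in Theorem~\ref{thm-separators}, where the linear lower bound on separators of the giant was established by a sprinkling-and-comparison argument. The present corollary is a clean deduction from that theorem combined with the Kawarabayashi--Reed black box, the only care required being to ensure that the resulting $\delta$ depends only on the allowed parameters $b$, $d$, $p$ (which is transparent from the explicit choice above, since $\theta_1$, $\delta_0$, and $K_0$ are all independent of $n$).
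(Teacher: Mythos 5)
Your argument is correct and is essentially identical to the paper's: deduce the result from the Kawarabayashi--Reed separator theorem combined with the no-small-separator statement of Theorem~\ref{thm-separators} and the size bound from Theorem~\ref{mainthm:giant-2core-sizes}. The only cosmetic difference is that you carry the explicit upper bound $|V(\GC)| \le \tfrac{3}{2}\theta_1 n$ into the choice of $\delta$, whereas the trivial bound $|V(\GC)| \le n$ already suffices.
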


The order of magnitude for the maximal $h$ such that $G$ contains a minor of $K_h$  is obviously optimal, as the base graph $\cG$ has only linearly many edges to begin with.

\subsubsection*{Acknowledgement.} This work was initiated while the authors were visiting the Theory Group of Microsoft Research in Redmond, and they are grateful to the group for its hospitality.

\bibliographystyle{abbrv}
\bibliography{biblio}

\end{document}